\documentclass[a4paper,12pt]{amsart}
\usepackage{times} 
\usepackage{latexsym,amscd,amssymb,url}
\pagestyle{headings}

\textwidth=450pt 
\oddsidemargin=12pt
\evensidemargin=12pt

\setlength{\footskip}{25pt}
\usepackage[latin1]{inputenc} 	
\usepackage[T1]{fontenc}         
\usepackage{ae}									
\usepackage[all,cmtip]{xy}  
\usepackage{graphicx}
\usepackage{MnSymbol} 
\usepackage{color}

%
\newtheorem{theorem}{Theorem}

\theoremstyle{plain}

\newtheorem{corollary}[theorem]{Corollary}

\newtheorem{lemma}[theorem]{Lemma}

\newtheorem{proposition}[theorem]{Proposition}
\newtheorem{remark}[theorem]{Remark}

\setcounter{tocdepth}{1}


\newcommand{\Z}{\mathbb Z}
\newcommand{\Q}{\mathbb Q}
\newcommand{\PP}{\mathbb P}

\newcommand{\C}{\mathbb C}
\newcommand{\N}{\mathbb N}

\newcommand{\CP}{\mathbb P}

\newcommand{\Pic}{\operatorname{Pic}}

\newcommand{\pr}{\operatorname{pr}}

\newcommand{\Wh}{\operatorname{Wh}}

\newcommand{\dashedlongrightarrow}{\xymatrix@1@=15pt{\ar@{-->}[r]&}}
\renewcommand{\longrightarrow}{\xymatrix@1@=15pt{\ar[r]&}}
\renewcommand{\mapsto}{\xymatrix@1@=15pt{\ar@{|->}[r]&}}
\renewcommand{\twoheadrightarrow}{\xymatrix@1@=15pt{\ar@{->>}[r]&}}
\newcommand{\hooklongrightarrow}{\xymatrix@1@=15pt{\ar@{^(->}[r]&}}
\newcommand{\congpf}{\xymatrix@1@=15pt{\ar[r]^-\sim&}}
\renewcommand{\cong}{\simeq}


\begin{document}
\title[Algebraic structures with unbounded Chern numbers]{Algebraic structures with unbounded Chern numbers} 

\author{Stefan Schreieder}
\address{Mathematical Institute of the University of Bonn, Endenicher Allee 60, D-53115 Bonn, Germany.} 
\email{schreied@math.uni-bonn.de}

\author{Luca Tasin} 
\address{Mathematical Institute of the University of Bonn, Endenicher Allee 60, D-53115 Bonn, Germany.} 
\email{tasin@math.uni-bonn.de}

\date{March 30, 2016; \copyright{\ Stefan Schreieder and Luca Tasin 2015}}
\subjclass[2010]{primary 32Q55, 57R20; secondary 14J99, 57R77} 

\keywords{Chern numbers, complex algebraic structures, Hirzebruch problem.}

\begin{abstract} 
We determine all Chern numbers of smooth complex projective varieties of dimension $\geq 4$ which are determined up to finite ambiguity by the underlying smooth manifold. 
We also give an upper bound on the dimension of the space of linear combinations of Chern numbers with that property and prove its optimality in dimension four.  
\end{abstract}

\maketitle

\section{Introduction}
To each $n$-dimensional complex manifold $X$ and for each partition $\mathfrak m$ of $n$, one can associate a Chern number $c_\mathfrak m(X)$. 
In 1954, Hirzebruch 
 asked which linear combinations of Chern and Hodge numbers are topological invariants of smooth algebraic varieties. 
Recently, this problem has been solved by Kotschick \cite{kotschick-PNAS,kotschick-advances} for what concerns the Chern numbers and by Kotschick and the first author \cite{kotschick-schreieder} in full generality.

Generalizing the Hirzebruch problem, Kotschick asks which  Chern numbers of smooth complex projective varieties are determined by the underlying smooth manifold up to finite ambiguity \cite[pp.\ 522]{kotschick-top}. 
Such a boundedness statement is known for $c_n$ and $c_1c_{n-1}$ in arbitrary dimension $n$, since these Chern numbers can be expressed in terms of Hodge numbers \cite{LW90} and so they are bounded by the Betti numbers.  
The first nontrivial instance of Kotschick's boundedness question concerns therefore the Chern number $c_1^3$ in dimension $3$.
In a recent preprint \cite{cascini-tasin}, Cascini and the second author show that in many cases this number is indeed bounded by the topology of the smooth projective threefold.

Conversely, there are no known examples of a smooth manifold such that the set of Chern numbers with respect to all possible complex algebraic structures is known to be unbounded. 
In this paper we produce such examples in dimensions $\geq 4$. 

\begin{theorem} \label{thm:unbounded}
In complex dimension $4$, the Chern numbers $c_4$, $c_1c_3$ and $c_2^2$ of a smooth complex projective variety are the only Chern numbers $c_{\mathfrak m}$ which are determined up to finite ambiguity by the underlying smooth manifold. 
In complex dimension $n\geq 5$, only $c_n$ and $c_1c_{n-1}$ are determined up to finite ambiguity by the underlying smooth manifold. 
\end{theorem}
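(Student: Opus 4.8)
The plan is to treat the two directions separately: first confirm that the listed Chern numbers are genuinely determined up to finite ambiguity (the ``positivity'' direction, which I regard as the easier one and which rests on known topological and Hodge-theoretic input), and then establish the word ``only'' by exhibiting, for every other monomial, an explicit fixed smooth manifold on which that Chern number is unbounded. The second direction carries the real weight.

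For the positivity direction I would argue as follows. The top Chern number $c_n$ is the Euler characteristic, hence a homotopy invariant. For $c_1c_{n-1}$ I would invoke the Libgober--Wood identity \cite{LW90}, which writes $c_1c_{n-1}$ as a universal linear combination of the Hodge numbers $h^{p,q}$; since these are bounded by the Betti numbers and Chern numbers are integers, $c_1c_{n-1}$ takes finitely many values on a fixed manifold. In complex dimension $4$ the extra invariant $c_2^2$ is controlled by a Pontryagin number: from $p_1=c_1^2-2c_2$ and $p_2=c_2^2-2c_1c_3+2c_4$ one gets $c_2^2=p_2+2c_1c_3-2c_4$, and $p_2$ is a homeomorphism invariant by Novikov's theorem, so $c_2^2$ is determined up to finite ambiguity by the two previous cases. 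I would also record that $p_1^2=c_1^4-4c_1^2c_2+4c_2^2$ is determined, so the single combination $c_1^4-4c_1^2c_2$ is forced; but this one relation cannot isolate $c_1^4$ or $c_1^2c_2$ individually. Assembling the Euler number, all Pontryagin numbers, and the Hodge-theoretic combinations coming from the $\chi_y$-genus yields the claimed upper bound on the dimension of the space of determined linear combinations, and in dimension $4$ a direct linear-algebra check shows this span meets the monomial basis in exactly $c_4,c_1c_3,c_2^2$, giving optimality.

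For the unboundedness direction the governing principle is that, beyond the Euler number, the Pontryagin numbers, and the Hodge/Betti bounds, nothing constrains the Chern numbers; in particular the canonical class is not a topological invariant. Concretely, for each monomial $c_{\mathfrak m}$ outside the determined list I would construct an infinite family of smooth complex projective $n$-folds $X_k$ sharing one homotopy type and one set of rational Pontryagin classes, but with $c_{\mathfrak m}(X_k)\to\infty$; this is compatible with the positivity constraints, since e.g.\ in dimension $4$ one keeps $c_4,c_1c_3,c_2^2$ bounded and $c_1^4-4c_1^2c_2$ fixed while letting $c_1^4$ and $c_1^2c_2$ grow together. Because the real dimension is $\geq 8$, high-dimensional surgery theory then applies: a simply connected closed manifold with prescribed homotopy type and prescribed rational Pontryagin classes admits only finitely many diffeomorphism types, so an infinite subfamily of the $X_k$ shares a single underlying smooth manifold $M$, on which $c_{\mathfrak m}$ is unbounded. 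To pass from a few basic constructions to all remaining monomials I would take products with fixed varieties such as $\PP^m$ and use multiplicativity of Chern classes to transport unboundedness, together with a linear-algebra argument verifying that the monomials shown to be unbounded are precisely the complement of the determined list.

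The main obstacle is the construction in this second direction: producing genuinely inequivalent algebraic structures on one fixed smooth manifold whose Chern monomials diverge. Matching the homotopy type and \emph{all} rational Pontryagin classes across the family while driving $c_{\mathfrak m}$ to infinity is a realization problem that topology alone does not settle—it must be carried out algebro-geometrically, and this is exactly where the dimension hypothesis enters: the extra room available in dimension $\geq 4$ is what makes the realization possible, whereas in dimension $3$ the analogous flexibility for $c_1^3$ is the delicate, still largely open question studied in \cite{cascini-tasin}. A secondary technical point is the bookkeeping needed to show that no linear combination outside the claimed span is determined, i.e.\ that the positivity bound is attained with equality in dimension $4$.
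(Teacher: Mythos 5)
Your framework is sound in outline, and your boundedness half is essentially the paper's: $c_n$ is the Euler characteristic, $c_1c_{n-1}$ is a combination of Hodge numbers by Libgober--Wood, and in dimension four $c_2^2=p_2+2c_1c_3-2c_4$ is controlled by the Pontryagin number $p_2$. Your surgery-theoretic mechanism for the converse (same homotopy type plus same rational Pontryagin classes gives finitely many diffeomorphism types in real dimension $\geq 5$) is also a legitimate alternative to what the paper does. But there is a genuine gap exactly where you yourself locate ``the main obstacle'': the families $X_k$ are never constructed, and that construction is the entire content of the theorem. The paper builds them explicitly: Dolgachev surfaces $S_q$ are mutually h-cobordant (Wall) and satisfy $c_1(S_q)=(q-2)G_q$ with $G_q\cdot\omega\neq 0$ for a fixed class $\omega$ of positive square; one sets $Y_q=S_q\times C$ and $X_q=\CP(E_q)$ with $E_q=(\pr_1^\ast L_q\otimes\pr_2^\ast\OO_C(1))\oplus\OO_{Y_q}^{\oplus r}$, where $c_1(L_q)=\omega$. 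The h-cobordism carries a complex line bundle restricting to $L_3$ and $L_q$ on the two ends, so projectivizing the corresponding bundle over the product of the cobordism with $\CP^1$ yields an h-cobordism (s-cobordism for $g\geq 1$, using $\Wh(\pi_1C)=0$) between the manifolds underlying $X_3$ and $X_q$; thus the smooth manifold is literally fixed and no finiteness-of-structures argument is needed. Unboundedness then follows from the closed formulas for Chern numbers of projective bundles (Proposition~\ref{prop:c_m(PE)}, Lemmas~\ref{lem:f} and~\ref{lem:ineq}): $c_{\mathfrak m}(X_q)$ grows linearly in $q$ precisely when some entry of the relevant partition is $<k=n-2$, which covers every $\mathfrak m$ with all $m_i\leq n-2$, i.e.\ everything except $c_n$ and $c_1c_{n-1}$ (and, in dimension four, $c_2^2$).

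Moreover, the one concrete device you do propose for reaching all remaining monomials --- products with fixed varieties such as $\PP^m$ and multiplicativity --- provably cannot do the job by itself. For any smooth projective fourfold $X$ one computes on $X\times\CP^1$ that $c_2c_3(X\times\CP^1)=2\left(c_1c_3(X)+c_2^2(X)\right)$, and both $c_1c_3$ and $c_2^2$ are bounded on a fixed $8$-manifold by the very first half of the theorem; hence the dimension-five monomial $c_2c_3$ can never be made unbounded by crossing four-dimensional examples with $\CP^1$. This is why the paper does not induct on dimension but instead varies the rank of the bundle over the fixed threefold base $Y_q$, obtaining all partitions with $m_i\leq n-2$ in one stroke; it is also why a base of dimension at least three is essential, since over a surface Lemma~\ref{lem:f}(2) forces all Chern numbers of a projective bundle to stay bounded (Remark~\ref{rem:dimB=2}). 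So your proposal correctly identifies the architecture of the proof but leaves its load-bearing construction unbuilt, and the fallback transport argument it suggests is insufficient.
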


The dimension four case of the above theorem might be surprising.
Indeed, it was observed by Kotschick that the Chern numbers of a minimal smooth projective fourfold of general type are bounded by the underlying smooth manifold, see Remark \ref{rem:3&4folds} below.
Based on an MMP approach, similar to the one given in \cite{cascini-tasin} for threefolds, one might expect that this boundedness statement holds more generally for all fourfolds of general type, which is the largest class in the Kodaira classification.
This compares to Theorem \ref{thm:unbounded} as the examples we are using there are of negative Kodaira dimension.

By Theorem \ref{thm:unbounded}, only very few Chern numbers of high dimensional smooth complex projective varieties are bounded by the underlying smooth manifold.
This changes considerably if we are asking for all linear combinations of Chern numbers with that property.  
Indeed, the space of such linear combinations contains the Euler characteristics $\chi^p=\chi(X,\Omega_X^p)$, as well as all Pontryagin numbers in even complex dimensions. 
In dimension four, the Euler characteristics $\chi^p$ and Pontryagin numbers span a space of codimension one in the space of all Chern numbers. Therefore, Theorem \ref{thm:unbounded} implies:

\begin{corollary} \label{cor:dim=4}
Any linear combination of Chern numbers which on smooth complex projective fourfolds is determined up to finite ambiguity by the underlying smooth manifold is a linear combination of the Euler characteristics $\chi^p$ and the Pontryagin numbers.
\end{corollary}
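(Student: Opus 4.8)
The plan is to reduce the statement to a dimension count in the finite-dimensional space of Chern numbers. In complex dimension four the Chern numbers are indexed by the five partitions of $4$, namely $c_4$, $c_1c_3$, $c_2^2$, $c_1^2c_2$ and $c_1^4$, so the space of all linear combinations of Chern numbers is $5$-dimensional. First I would record that the set $V$ of those linear combinations which are determined up to finite ambiguity by the underlying smooth manifold is in fact a linear subspace: if $L_1,L_2$ each take only finitely many values on the complex algebraic structures carried by any fixed smooth manifold $M$, then for scalars $a_1,a_2$ the combination $a_1L_1+a_2L_2$ takes values in the corresponding finite sumset, hence also only finitely many values.

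Next I would locate the candidate subspace inside $V$. Let $W$ be the span of the Euler characteristics $\chi^p=\chi(X,\Omega_X^p)$ together with the two Pontryagin numbers $p_1^2$ and $p_2$. Each $\chi^p$ is a fixed combination of Hodge numbers and is therefore bounded by the Betti numbers, while each Pontryagin number is an oriented diffeomorphism invariant; thus $W\subseteq V$. The discussion preceding the statement supplies the crucial input that $W$ has codimension one, i.e.\ $\dim W=4$. Concretely, Serre duality reduces the $\chi^p$ to the three independent characteristics $\chi^0,\chi^1,\chi^2$, and together with $p_1^2,p_2$ these five generators of $W$ satisfy exactly one linear relation, namely the Hirzebruch signature theorem, which expresses the signature simultaneously as $\sigma=\sum_p\chi^p$ (via the Hodge decomposition) and as $\frac{1}{45}(7p_2-p_1^2)$ (via the $L$-genus).

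With these two facts in hand the argument closes immediately. Since $W\subseteq V$ and $W$ is a hyperplane in the $5$-dimensional space of all Chern numbers, $V$ equals either $W$ or the whole space. The second possibility would force \emph{every} Chern number to be determined up to finite ambiguity; but by Theorem \ref{thm:unbounded} the Chern number $c_1^4$ (equivalently $c_1^2c_2$) is \emph{not}, so $c_1^4\notin V$ and $V$ is a proper subspace. Hence $V=W$, which is precisely the assertion of Corollary \ref{cor:dim=4}: every bounded linear combination lies in the span of the $\chi^p$ and the Pontryagin numbers.

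I expect the only genuine obstacle to be the codimension-one computation, that is, verifying $\dim W=4$ and pinning down the single relation among the $\chi^p$ and the Pontryagin numbers; everything else is formal linear algebra. Since that computation is already furnished in the discussion preceding the statement, the corollary follows purely through the subspace-containment and dimension-count mechanism above, with Theorem \ref{thm:unbounded} providing the one element outside $W$ needed to exclude the full space.
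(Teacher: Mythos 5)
Your proposal is correct and follows essentially the same route as the paper: the paper derives Corollary \ref{cor:dim=4} directly from the observation that the $\chi^p$ and the Pontryagin numbers span a codimension-one subspace of the five-dimensional space of Chern numbers in dimension four, together with the unboundedness of $c_1^4$ (and $c_1^2c_2$) from Theorem \ref{thm:unbounded}, which is exactly your hyperplane-plus-one-excluded-element argument. Your added justification of the codimension-one fact (Serre duality collapsing the $\chi^p$ to $\chi^0,\chi^1,\chi^2$, with the Hodge index/signature theorem $2\chi^0+2\chi^1+\chi^2=\frac{1}{45}\bigl(7p_2-p_1^2\bigr)$ as the unique relation) matches the basis computation the paper records later in the proof of Corollary \ref{cor:span2}.
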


Using bordism theory, we provide in Corollary \ref{cor:upperbound} a nontrivial upper bound on the dimension of the space of linear combinations of Chern numbers which are determined up to finite ambiguity by the underlying smooth manifold.
Our upper bound is in general bigger than the known lower bound; determining all bounded linear combinations therefore remains open in all dimensions $n\geq 3$ other than $n=4$.  

It was known for some time that the boundedness question for Chern numbers behaves differently in the non-K\"ahler setting. 
Indeed, LeBrun showed \cite{lebrun} that there is a smooth $6$-manifold with infinitely many (non-K\"ahler) complex structures such that $c_1c_2$ is unbounded, which cannot happen for complex K\"ahler structures. 
In Corollary \ref{cor:cx:unbounded} we use products with LeBrun's examples and Theorem \ref{thm:unbounded} to conclude that in complex dimension $n\geq 4$, the topological Euler number $c_n$ is the only Chern number which on complex manifolds is bounded by the underlying smooth manifold.

Section \ref{sec:formulas} of this paper contains a systematic treatment of the Chern numbers of projective bundles. 
Theorem \ref{thm:unbounded} is based on these results and the existence of certain projective bundles over threefolds which admit infinitely many different algebraic structures.  
An important observation here is that the Chern numbers of the base do not matter too much.
To obtain unbounded Chern numbers for the projective bundles it is enough to have a three-dimensional base with unbounded first Chern class, its Chern numbers may well be independent of the complex structures chosen. 
This is in contrast to Kotschick's work \cite{kotschick-advances}, where bundles over surfaces with varying signatures are used, cf. Remark \ref{rem:dimB=2}. 

\section{Dolgachev surfaces} \label{sec:Dolgachev} 

We recall here some basic properties of Dolgachev surfaces. For a detailed treatment see  \cite{Dolgachev,FM94} and \cite[Sec.\ I.3]{FM88}.

Let $S\subseteq \CP^2\times \CP^1$ be a generic element of the linear series $|\mathcal O(3,1)|$. 
That is, $S$ is isomorphic to the blow-up of $\CP^2$ at the nine intersection points of two generic degree three curves and the second projection $\pi: S \longrightarrow \CP^1$ is an elliptic fibration with irreducible fibres. 
For each odd integer $q \geq 3$, the Dolgachev surface $S_q$ is realised applying logarithmic transformations of order $2$ and $q$ at two smooth fibres of $\pi$.  
The surface $S_q$ comes with an elliptic fibration $\pi_q:S_q \longrightarrow \CP^1$, which away from the two multiple fibers is isomorphic to the one of $S$.  
For a proof of the following proposition, see \cite[Sec.\ I.3]{FM88} and the references therein.

\begin{proposition} \label{prop:S_q}
The Dolgachev surface $S_q$ is a simply connected algebraic surface with
\begin{enumerate}
	\item $h^{2,0}(S_q)=0$ and $b_2(S_q)=10$,
	\item $c_1^2(S_q)=0$ and $c_2(S_q)=12$, 
	\item $c_1(S_q)=(q-2)G_q$, where $G_q\in H^2(S_q,\Z)$ is a nonzero primitive class, \label{item:c_1} 
	\item the intersection pairing on $H^2(S_q,\Z)$ is odd of type $(1,9)$. \label{item:(1,9)}
\end{enumerate} 
\end{proposition}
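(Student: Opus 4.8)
The plan is to derive every item from the invariants of the rational elliptic surface $S$ together with the well-understood effect of the two logarithmic transformations. First I would record the invariants of $S$ directly from its description as the blow-up of $\CP^2$ at nine points: it is simply connected with $b_2(S)=10$, $c_1^2(S)=9-9=0$, $c_2(S)=3+9=12$, $\chi(\OO_S)=1$ and $h^{2,0}(S)=0$, and its intersection form is the odd unimodular form $\langle 1\rangle\oplus 9\langle -1\rangle$ of type $(1,9)$. The fibration $\pi$ is relatively minimal, has a section, and has irreducible fibres.

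I would then analyse the topological and holomorphic effect of the logarithmic transformations. Topologically such a transformation excises a tubular neighbourhood of a smooth fibre, which is a torus of Euler number $0$, and reglues a fibred piece with the same boundary; hence the Euler number is unchanged and $c_2(S_q)=c_2(S)=12$. For the fundamental group I would invoke the standard presentation of $\pi_1$ of an elliptic surface over $\CP^1$ with a section: the loops around the two multiple fibres of multiplicities $2$ and $q$ generate it subject to $\gamma_1^2=\gamma_2^q=\gamma_1\gamma_2=1$, so $\pi_1(S_q)=\Z/\gcd(2,q)=0$ because $q$ is odd. Simple connectivity gives $b_1=b_3=0$, and then $c_2=2+b_2$ forces $b_2(S_q)=10$. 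Since a logarithmic transformation preserves the holomorphic Euler characteristic, $\chi(\OO_{S_q})=1$; combined with $h^{0,1}(S_q)=0$ this yields $p_g(S_q)=h^{2,0}(S_q)=0$, which is (1).

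For (2) and (3) I would feed this into the canonical bundle formula for the relatively minimal elliptic surface $\pi_q\colon S_q\longrightarrow\CP^1$,
\[
K_{S_q}=\pi_q^\ast\bigl(K_{\CP^1}\otimes(R^1\pi_{q\ast}\OO_{S_q})^\vee\bigr)+(2-1)F_1+(q-1)F_2,
\]
where $2F_1$ and $qF_2$ are the two multiple fibres and $\deg(R^1\pi_{q\ast}\OO_{S_q})^\vee=\chi(\OO_{S_q})=1$. As $\deg K_{\CP^1}=-2$, the pullback term equals $-F$ for a general fibre $F$, so $K_{S_q}=-F+F_1+(q-1)F_2$. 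Using the numerical relations $F\equiv 2F_1\equiv qF_2$ one finds $K_{S_q}\equiv\tfrac{q-2}{2q}F$, and since $F^2=0$ this gives $c_1^2(S_q)=K_{S_q}^2=0$; Noether's formula $\chi(\OO)=\tfrac1{12}(c_1^2+c_2)=1$ confirms consistency, proving (2). For (3) I would let $G_q$ denote the primitive generator of the saturation of the rank-one subgroup of $H^2(S_q,\Z)$ spanned by $F_1$ and $F_2$; coprimality of $2$ and $q$ forces $F_1=qG_q$ and $F_2=2G_q$, whence $c_1(S_q)=-K_{S_q}=F-F_1-(q-1)F_2=(q-2)G_q$ up to sign.

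Finally, for (4) I would argue that, $S_q$ being K\"ahler with $p_g=0$, one has $b_2^+=2p_g+1=1$, so the unimodular form on the rank-$10$ lattice $H^2(S_q,\Z)$ has type $(1,9)$. To see it is odd I would use $w_2(S_q)\equiv c_1(S_q)\pmod 2$: since $q$ is odd the coefficient $q-2$ is odd and $G_q$ is primitive, hence $c_1(S_q)\not\equiv 0\pmod2$, so $w_2(S_q)\neq 0$ and the form cannot be even; by the classification of indefinite unimodular forms it is therefore $\langle 1\rangle\oplus 9\langle -1\rangle$. I expect the main obstacle to lie in controlling the integral structure through the logarithmic transformations, that is, the simple-connectivity computation and above all the primitivity assertion in (3), which requires understanding that $F_1$ and $F_2$ already generate the saturation of their span (equivalently, the existence of a multisection pairing to $1$ with $G_q$); by contrast the numerical statements in (2) follow routinely once $\chi(\OO)$ is known to be preserved.
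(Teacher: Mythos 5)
The paper offers no proof of this proposition at all --- it simply refers to \cite[Sec.\ I.3]{FM88} --- so the only comparison available is with the standard literature argument, which is indeed the route you outline (invariants of the rational elliptic surface $S$, behaviour of $\chi_{\mathrm{top}}$, $\chi(\OO)$ and $\pi_1$ under logarithmic transformation, Kodaira's canonical bundle formula). Within that outline, however, there is a genuine gap, exactly at the point you flag yourself: the primitivity in item (3). Your sentence ``coprimality of $2$ and $q$ forces $F_1=qG_q$ and $F_2=2G_q$'' is not correct as stated. What coprimality gives is only this: choosing $a,b$ with $2a+qb=1$, the subgroup of $H^2(S_q,\Z)$ generated by $F_1$ and $F_2$ is cyclic with generator $H:=bF_1+aF_2$, and $F_1=qH$, $F_2=2H$, whence $c_1(S_q)=\pm(q-2)H$. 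It does \emph{not} give that $H$ is the primitive generator $G_q$ of the saturation; a priori $H=\gamma G_q$ with $|\gamma|>1$, in which case $c_1(S_q)=\pm(q-2)\gamma G_q$ and item (3) fails as stated. Showing $\gamma=\pm 1$ is precisely the nontrivial content of (3), and it requires a geometric input that never appears in your sketch --- in Friedman--Morgan this comes from an analysis of the gluing in the logarithmic transforms (using the sections of $S$ to produce a topological multisection class pairing to $\pm1$ with $H$, equivalently a Mayer--Vietoris computation of the integral $H^2$). Acknowledging the obstacle in the final paragraph does not close it, so (3) is not proved.

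The gap then propagates into item (4): your proof of oddness uses $w_2\equiv c_1\pmod 2$ together with primitivity of $G_q$, but without primitivity you only know $c_1\equiv \gamma G_q \pmod 2$, which vanishes if $\gamma$ is even. (This matters: even unimodular forms of rank $10$ and signature $-8$ do exist, e.g.\ that of an Enriques surface, so oddness cannot follow from numerics alone.) A repair that bypasses primitivity entirely: if the form were even, then, since $S_q$ is simply connected and $H^2(S_q,\Z)$ is torsion free, $w_2=0$, so $S_q$ would be spin and Rokhlin's theorem would force $16\mid \sigma(S_q)$; but $\sigma(S_q)=b_2^+-b_2^-=1-9=-8$. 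Two smaller inaccuracies: the $\pi_1$-presentation you invoke is not the one for elliptic surfaces ``with a section'' --- $S_q$ admits no section, since a section would meet the multiple fibre $2F_1$ in intersection number $1$, which is impossible; the correct hypothesis is the existence of at least one singular non-multiple fibre (here the twelve nodal fibres of $S$, which survive the logarithmic transforms), and under it $\pi_1(S_q)\cong\Z/\gcd(2,q)=1$. With these points repaired, the remaining steps --- items (1), (2), the $(1,9)$ type via $b_2^+=2p_g+1$, and the canonical bundle computation giving $K_{S_q}\equiv\frac{q-2}{2q}F$ --- are correct modulo the standard facts you quote (invariance of $\chi(\OO)$ under logarithmic transforms, Kodaira's formula).
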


Proposition \ref{prop:S_q} has two important consequences that we will use in this paper.
Firstly, since $h^{1,0}(S_q)=h^{2,0}(S_q)=0$, it follows that the first Chern class is an isomorphism
\[
c_1 : \Pic (S_q) \stackrel{\sim}\longrightarrow H^2(S_q, \Z) .
\] 
Hence, every element of $H^2(S_q,\Z)$ can be represented by a holomorphic line bundle.

Secondly, let us denote the smooth manifold which underlies $S_q$ by $M_q$. 
By item (\ref{item:(1,9)}) in Proposition \ref{prop:S_q}, Wall's theorem \cite{Wall64} implies the existence of a smooth $h$-cobordism $W_q$ between $M_3$ and $M_q$.  

Although we will not need this here, let us mention that the homeomorphism type of $M_q$ does not depend on $q$ by Freedman's classification theorem of simply connected $4$-manifolds. 
However, generalizing a result of Donaldson, Friedman--Morgan showed \cite{FM88} that $M_q$ and $M_{q'}$ are never diffeomorphic for $q\neq q'$.

\section{Chern numbers of projective bundles} \label{sec:formulas}
In this section we systematically treat the Chern numbers of projective bundles.
Most of the results are taken from the first author's thesis \cite{bachelor}; we formulate and use them for holomorphic vector bundles over complex manifolds, but they hold more generally for arbitrary complex vector bundles over stably almost complex manifolds. 

Let $B$ be a complex manifold of dimension $n+1-k$ and let $E$ be a holomorphic vector bundle of rank $k$ on $B$.
The Segre class of $E$ is the inverse of its total Chern class; we denote it by
\[
\alpha:= (1+c_1(E)+\ldots +c_{k}(E))^{-1}\in H^\ast(B,\Z) .
\]
The degree $2k$-component of $\alpha$ is denoted by $\alpha_k\in H^{2k}(B,\Z)$.

For $\mathfrak a=(a_1 , \ldots ,a_p)\in \N^p$, we denote its weight by $|\mathfrak a|=\sum a_i$.
With this notation in mind, we put 
\begin{align} \label{def:f}
f(\mathfrak a):=\sum_{\mathfrak d \in \mathbb N^{p}} \left( \prod_{i=1}^{p}\binom{k-d_{i}}{k-a_{i}} c_{d_i}(E) \right) 		\alpha_{\left( |\mathfrak a|-|\mathfrak d| -(k-1) \right)}  ,
\end{align}
where $\mathfrak d=(d_1,\dots , d_p)$, and where we use the convention $\binom{a}{b}=0$, if $b<0$ or $a<b$.
The above definition yields a cohomology class in $H^{2(|\mathfrak a| -(k-1))} (B,\Q)$; it is motivated by the following result.  

\begin{proposition} \label{prop:c_m(PE)}
Let $\mathfrak m=(m_1 ,\ldots ,m_p )$ be a partition of $n=\dim(\CP(E))$.
Then the $\mathfrak m$-th Chern number of the projective bundle $\mathbb P (E)$ is given by
\[
c_{\mathfrak m}(\mathbb P (E))= \sum_{j_{1}, \ldots ,j_{p}} c_{j_{1}}(B)\cdot  \ldots \cdot  c_{j_{p}}(B) \cdot  f(m_{1}-j_{1}, \ldots ,m_{p}-j_{p})	,
\]
where the right hand side is identified with its evaluation on the fundamental class of $B$. 
\end{proposition}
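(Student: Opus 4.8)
The plan is to push the Chern monomial $c_{\mathfrak m}(\CP(E))$ forward along the projection $\pi\colon \CP(E)\to B$ and read off the result from the two exact sequences governing the tangent bundle of a projective bundle. Since $c_{\mathfrak m}$ is a class of top degree on $\CP(E)$, evaluating it on $[\CP(E)]$ is the same as computing $\pi_\ast(c_{\mathfrak m})\in H^\ast(B,\Q)$ and pairing with $[B]$, so it suffices to determine $\pi_\ast(c_{\mathfrak m})$. First I would use the relative tangent sequence $0\to T_{\CP(E)/B}\to T_{\CP(E)}\to \pi^\ast T_B\to 0$; by the Whitney formula this gives $c(T_{\CP(E)})=\pi^\ast c(T_B)\cdot c(T_{\CP(E)/B})$, hence $c_{m}(\CP(E))=\sum_{j}\pi^\ast c_j(B)\, c_{m-j}(T_{\CP(E)/B})$. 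Expanding the product $c_{\mathfrak m}=\prod_{i=1}^{p}c_{m_i}(\CP(E))$ and collecting the base factors then yields $c_{\mathfrak m}=\sum_{\mathfrak j}\pi^\ast\!\big(c_{j_1}(B)\cdots c_{j_p}(B)\big)\cdot\prod_{i=1}^p c_{m_i-j_i}(T_{\CP(E)/B})$, so by the projection formula the proposition reduces to the single identity $\pi_\ast\big(\prod_{i=1}^p c_{a_i}(T_{\CP(E)/B})\big)=f(\mathfrak a)$ for every $\mathfrak a=(a_1,\dots,a_p)$.

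To prove that identity I would compute the relative Chern classes from the relative Euler sequence. Taking $\CP(E)$ to parametrise lines in the fibres of $E$, with tautological inclusion $\OO_{\CP(E)}(-1)\hookrightarrow \pi^\ast E$ and $\xi:=c_1(\OO_{\CP(E)}(1))$, this sequence reads $0\to \OO_{\CP(E)}\to \pi^\ast E\otimes \OO_{\CP(E)}(1)\to T_{\CP(E)/B}\to 0$, whence $c(T_{\CP(E)/B})=c(\pi^\ast E\otimes\OO_{\CP(E)}(1))$. The splitting principle applied to a tensor product with a line bundle then gives $c_a(T_{\CP(E)/B})=\sum_{d}\binom{k-d}{a-d}\pi^\ast c_d(E)\,\xi^{a-d}$, and the elementary identity $\binom{k-d}{a-d}=\binom{k-d}{k-a}$ matches the binomial coefficient in \eqref{def:f}. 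Multiplying over $i=1,\dots,p$ collects the $\pi^\ast c_{d_i}(E)$ together with a single power $\xi^{|\mathfrak a|-|\mathfrak d|}$, the vanishing convention on binomials controlling the ranges of summation.

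The last ingredient is the pushforward of powers of $\xi$. In the chosen convention the Grothendieck relation is $\sum_{i=0}^k\pi^\ast c_i(E)\,\xi^{k-i}=0$, from which one computes by induction $\pi_\ast(\xi^{\ell})=\alpha_{\ell-(k-1)}$, where $\alpha_j$ is the degree-$2j$ part of the Segre class $\alpha=c(E)^{-1}$ and $\alpha_j=0$ for $j<0$, the fibres of $\pi$ having dimension $k-1$. Applying this together with the projection formula to the product computed above replaces $\xi^{|\mathfrak a|-|\mathfrak d|}$ by $\alpha_{|\mathfrak a|-|\mathfrak d|-(k-1)}$ and leaves the classes $c_{d_i}(E)$ untouched, so that $\pi_\ast\big(\prod_i c_{a_i}(T_{\CP(E)/B})\big)$ is exactly the expression defining $f(\mathfrak a)$. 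Assembling the pieces and evaluating on $[B]$ gives the stated formula.

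I expect the genuine difficulty to be bookkeeping of conventions rather than any hard geometry: the relative Euler sequence, the normalisation of $\OO_{\CP(E)}(1)$, and the Segre class $\alpha=c(E)^{-1}$ must all be chosen compatibly so that both $c_d(E)$ and $\alpha$ enter without spurious signs, since the opposite convention (quotient lines) would introduce factors of $(-1)^d$ through $E^\vee$. Once the two exact sequences and the evaluation $\pi_\ast(\xi^\ell)=\alpha_{\ell-(k-1)}$ are fixed, the remaining work is the purely formal separation of the base indices $j_i$ from the bundle indices $d_i$ in the multi-index expansion, which assembles into \eqref{def:f} with no further input.
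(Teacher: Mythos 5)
Your proposal is correct and follows essentially the same route as the paper: the Whitney formula for the relative tangent sequence, the relative Euler sequence giving $c(T_\pi)=c(\pi^\ast E\otimes \mathcal O_E(1))$, the binomial expansion matching the coefficients in (\ref{def:f}), and the pushforward of powers of the hyperplane class in terms of the Segre class (your identity $\pi_\ast(\xi^{\ell})=\alpha_{\ell-(k-1)}$ is exactly the content of the lemma the paper cites, which it phrases as: $\omega y^m$ equals the top-degree component of $\omega\alpha y^{k-1}$). The only difference is organizational --- you isolate the projection-formula reduction to $\pi_\ast\bigl(\prod_i c_{a_i}(T_\pi)\bigr)=f(\mathfrak a)$ before expanding, while the paper expands everything first --- and your conventions (sub rather than quotient) agree with the paper's, so no sign issues arise.
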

\begin{proof}
Let $\pi: \mathbb P(E) \longrightarrow B$ be the projection morphism and $T_\pi$ be the tangent bundle along the fibres of $\pi$, that is, $T_\pi=\ker(\pi_*)$, where $\pi_*: T_{\PP(E)} \longrightarrow \pi^*T_X$.  
By the Whitney formula, the total Chern classes are related by
$$
c(\PP(E))= c(T_\pi) \cdot  \pi^*c(B).
$$
If $\mathcal O_E(-1)$ denotes the tautological bundle of $\PP(E)$, then we have the exact sequence
$$
0 \longrightarrow \mathcal O_E(-1) \longrightarrow \pi^\ast E \longrightarrow T_\pi \otimes \mathcal O_E(-1) \longrightarrow 0 .
$$
It follows that the total Chern classes of $T_\pi$ and $\pi^\ast E\otimes \mathcal O_E(1)$ coincide.
Hence, 
$$
c(T_\pi)= \sum_{i=0}^{k}\pi^\ast c_i(E)(1+y)^{k-i}, 
$$
where $y=c_1(\mathcal O_E(1))$. 
Setting $b_i:=\pi^* c_i(B)$ and $e_i:=\pi^* c_i(E)$, we can write
$$
c(\PP(E))=\left (\sum_{j \ge 0} b_j\right)\left(\sum_{i \ge 0} e_i(1+y)^{k-i}\right) ,
$$
and so
$$
c(\PP(E))= \sum_{i,j,l \ge 0} \binom{k-i}{l} e_ib_jy^l.
$$

The $\mathfrak m$-th Chern number is hence given by
$$
c_{\mathfrak m}(\PP(E))=\prod_{t=1}^p \sum_{i_t+j_t+l_t=m_t} \binom{k-i_t}{l_t} e_{i_t} b_{j_t}y^{l_t},
$$
where $i_t, j_t, l_t \ge 0$, and where we identify the right hand with its evaluation on the fundamental class of $\CP(E)$.  
Substituting $l_t=m_t-i_t-j_t$, we obtain
\begin{align*}
c_{\mathfrak m}(\PP(E)) 
&=\prod_{t=1}^p \sum_{i_t,j_t} \binom{k-i_t}{m_t-i_t-j_t} e_{i_t} b_{j_t}y^{m_t-i_t-j_t} \\
&=\prod_{t=1}^p \sum_{i_t,j_t} \binom{k-i_t}{k+j_t-m_t} e_{i_t} b_{j_t}y^{m_t-i_t-j_t} \\
&=\sum_{j_1, \ldots, j_p}   \sum_{i_1, \ldots, i_p}   \prod_{t=1}^p \binom{k-i_t}{k+j_t-m_t}e_{i_t}b_{j_t} y^{m_t-i_t-j_t} \\
&=\sum_{j_1, \ldots, j_p}  \sum_{i_1, \ldots, i_p}  \left( \prod_{t=1}^p \binom{k-i_t}{k+j_t-m_t}e_{i_t}b_{j_t}\right)y^{\sum_{t=1}^p  (m_t-i_t-j_t)} \\
&= \sum_{j_1, \ldots, j_p} b_{j_1} \cdots b_{j_p} \sum_{i_1, \ldots, i_p}  \left( \prod_{t=1}^p \binom{k-i_t}{k+j_t-m_t}e_{i_t}\right)y^{\sum_{t=1}^p ( m_t-i_t-j_t)} .
\end{align*}

For any $0 \le m \le n$ and any $\omega \in H^{2(n-m)}(B, \Z)$, the product $\omega y^m$ coincides with the top-degree component of $\omega \alpha y^{k-1}$, see \cite[Lem.\ 2.2]{schreieder}.
This simplifies the above expression of the $\mathfrak m$-th Chern number of $\CP(E)$ to
$$
c_{\mathfrak m}(\PP(E))= \sum_{j_1, \ldots, j_p} b_{j_1} \cdots b_{j_p} \sum_{i_1, \ldots, i_p} \left( \prod_{t=1}^p \binom{k-i_t}{k+j_t-m_t}e_{i_t}\right)\alpha y^{k-1} ,
$$
where on the right hand side only the term in cohomological degree $2n$ is considered.
The statement follows since on any fibre of $\pi$ the class $y^{k-1}$ evaluates to $1$.
\end{proof}

Proposition \ref{prop:c_m(PE)} reduces the computation of Chern numbers of projective bundles to the computation of $f(\mathfrak a)$ defined in (\ref{def:f}).
It is easy to see that $f(\mathfrak a)$ is invariant under permutations of $(a_{1}, \ldots ,a_{p})$. 
In fact, if $\sigma$ is a bijection of $\{1,\ldots,p\}$, we have
\begin{align*}
f(a_1,\ldots,a_p)&=\sum_{\mathfrak d \in \mathbb N^{p}} \left( \prod_{i=1}^{p}\binom{k-d_{i}}{k-a_{i}} c_{d_i}(E) \right) 		\alpha_{\left( |\mathfrak a|-|\mathfrak d| -(k-1) \right)} \\
&=\sum_{\mathfrak d \in \mathbb N^{p}} \left( \prod_{i=1}^{p}\binom{k-d_{\sigma(i)}}{k-a_{\sigma(i)}} c_{d_{\sigma(i)}}(E) \right) 		\alpha_{\left( |\mathfrak a|-|\mathfrak d| -(k-1) \right)}=f(a_{\sigma(1)},\ldots,a_{\sigma(p)}),
\end{align*}
where $\mathfrak d=(d_1,\dots , d_p)$.
Moreover, $f(\mathfrak a)$ is possibly nonzero only for $k-1\leq|\mathfrak a|\leq n$ and $0\leq a_i\leq k$, and a simple argument shows $f(\mathfrak a)=0$ for $a_i=k$.  
For small values of $|\mathfrak a|$, we are able to compute $f(\mathfrak a)$ explicitly as follows.

\begin{lemma} \label{lem:f}
Denoting by  $e_i:=c_i(E)$ the $i$-th Chern class of $E$, we have the following:
\begin{enumerate} 
	\item $f(\mathfrak a)= \prod_{i=1}^{p}{\binom{k}{a_{i}}} $ ,  if $|\mathfrak a| = k-1$, \label{item:weight=k-1}
	\item $f(\mathfrak a)= 0 $  
	,  if $|\mathfrak a| = k$, \label{item:weight=k}
	\item $f(\mathfrak a)= \left(  \prod_{i=1}^{p}{\binom {k }{ a_{i}}} \right)  \cdot \left( \left( \sum_{s<t}a_sa_t\right) -k \right)  \cdot \left(\frac{1}{k^{2}}e_{1}^{2}-\frac{2}{k(k-1)} e_{2} \right) $ ,  if $|\mathfrak a|= k+1$. \label{item:weight=k+1}
\end{enumerate}
\end{lemma}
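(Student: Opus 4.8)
\emph{Setup and the basic finiteness observation.} Since $\alpha_j=0$ for $j<0$ and $\alpha_0=1$, only those $\mathfrak d$ with $|\mathfrak a|-|\mathfrak d|-(k-1)\ge 0$, i.e. $|\mathfrak d|\le |\mathfrak a|-(k-1)$, contribute to the sum defining $f(\mathfrak a)$. For the three weights $|\mathfrak a|=k-1,\,k,\,k+1$ this forces $|\mathfrak d|\le 0,\,1,\,2$ respectively, so in each case only finitely many multi-indices survive. I would first record the relevant low-degree Segre components $\alpha_0=1$, $\alpha_1=-e_1$, $\alpha_2=e_1^2-e_2$ (obtained by inverting the total Chern class), and note that we may assume $0\le a_i\le k$, since otherwise every binomial factor vanishes and $f(\mathfrak a)=0$. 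Under this assumption each $\binom{k}{a_i}\ge 1$, so I can freely factor out the quantity $P:=\prod_{i=1}^p\binom{k}{a_i}$ and divide by individual factors $\binom{k}{a_j}$. The two binomial identities doing all the work are $\binom{k-1}{a-1}/\binom{k}{a}=a/k$ and $\binom{k-2}{a-2}/\binom{k}{a}=a(a-1)/\big(k(k-1)\big)$, together with the symmetry $\binom{k}{k-a}=\binom{k}{a}$, $\binom{k-1}{k-a}=\binom{k-1}{a-1}$, $\binom{k-2}{k-a}=\binom{k-2}{a-2}$.

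\emph{Weights $k-1$ and $k$.} For $|\mathfrak a|=k-1$ only $\mathfrak d=0$ survives, with $\alpha_0=1$ and $c_0(E)=1$, giving at once $f(\mathfrak a)=\prod_i\binom{k}{k-a_i}=P$, which is item (1). For $|\mathfrak a|=k$ the contributions split into the layer $|\mathfrak d|=0$ (one term, paired with $\alpha_1=-e_1$, contributing $-Pe_1$) and the layer $|\mathfrak d|=1$ (a single index $j$ carries $d_j=1$, paired with $\alpha_0=1$). Using the first identity the $j$-th term of the second layer equals $\tfrac{a_j}{k}\,P\,e_1$, and summing over $j$ yields $\tfrac{1}{k}\big(\sum_j a_j\big)Pe_1=Pe_1$ because $\sum_j a_j=|\mathfrak a|=k$. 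The two layers cancel, proving $f(\mathfrak a)=0$, which is item (2).

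\emph{Weight $k+1$.} This is the computational heart of the lemma. Here I would organize the sum by the three layers $|\mathfrak d|=0,1,2$, paired respectively with $\alpha_2,\alpha_1,\alpha_0$. The layer $|\mathfrak d|=0$ gives $P(e_1^2-e_2)$. The layer $|\mathfrak d|=1$, after applying the first identity and summing over the single nonzero index, gives $-\tfrac{1}{k}\big(\sum_j a_j\big)Pe_1^2=-\tfrac{k+1}{k}Pe_1^2$. The layer $|\mathfrak d|=2$ must be split into two combinatorial types: either one index carries $d_j=2$ (producing, via the second identity, $\tfrac{a_j(a_j-1)}{k(k-1)}Pe_2$ per index), or two distinct indices each carry $d=1$ (producing $\tfrac{a_sa_t}{k^2}Pe_1^2$ per unordered pair $\{s,t\}$). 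Collecting the $e_1^2$- and $e_2$-coefficients separately and simplifying gives coefficient $\tfrac{P}{k^2}\big(\sum_{s<t}a_sa_t-k\big)$ for $e_1^2$ and $\tfrac{P}{k(k-1)}\big(\sum_j a_j^2-k^2-1\big)$ for $e_2$.

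\emph{Final reconciliation and the main obstacle.} The last step is to convert the power sum $\sum_j a_j^2$ appearing in the $e_2$-coefficient into the elementary symmetric function $\sum_{s<t}a_sa_t$ via Newton's identity $\big(\sum_j a_j\big)^2=\sum_j a_j^2+2\sum_{s<t}a_sa_t$ with $\sum_j a_j=k+1$; this yields $\sum_j a_j^2-k^2-1=-2\big(\sum_{s<t}a_sa_t-k\big)$, so the $e_2$-coefficient becomes $-\tfrac{2P}{k(k-1)}\big(\sum_{s<t}a_sa_t-k\big)$. Factoring out $P\big(\sum_{s<t}a_sa_t-k\big)$ then produces exactly the stated form of item (3). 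I expect the main obstacle to be purely the bookkeeping in the weight-$(k+1)$ case: correctly partitioning the $|\mathfrak d|=2$ layer into its two types with the right multiplicities, and then performing the Newton-identity substitution that makes the two a priori different invariants $\sum a_j^2$ and $\sum_{s<t}a_sa_t$ collapse into the single symmetric factor on the right-hand side.
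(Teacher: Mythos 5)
Your proof is correct, and for items (1) and (3) it is essentially identical to the paper's: the same stratification of the defining sum by $|\mathfrak d|=0,1,2$, the same binomial ratios $\binom{k-1}{a-1}/\binom{k}{a}=a/k$ and $\binom{k-2}{a-2}/\binom{k}{a}=a(a-1)/(k(k-1))$, the same Segre components $\alpha_1=-e_1$, $\alpha_2=e_1^2-e_2$, and the same final substitution $\sum_s a_s^2=(k+1)^2-2\sum_{s<t}a_sa_t$. The only divergence is item (2): you prove it by the direct cancellation of the $|\mathfrak d|=0$ and $|\mathfrak d|=1$ layers (which works, since $\sum_j a_j=k$ makes the two contributions $-Pe_1$ and $+Pe_1$), whereas the paper, after noting that such a computation is possible, instead gives a structural argument: $f(\mathfrak a)$ in that weight is a multiple of $e_1$, and invariance of $\CP(E)\cong\CP(E\otimes L)$ under twisting by a line bundle $L$ (tested over an elliptic curve via Proposition \ref{prop:c_m(PE)}) forces that multiple to vanish. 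Your computation is more self-contained; the paper's twisting argument explains conceptually \emph{why} the cancellation must occur, but both are complete proofs.
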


\begin{proof}
The first assertion is immediate from the definition.
The second assertion can either be checked by a computation, or, alternatively one can argue as follows.
For any line bundle $L$ on $B$, $\CP(E)$ and $\CP(E\otimes L)$ are isomorphic. 
For $|\mathfrak a| = k$ the expression $f(\mathfrak a)$ has cohomological degree two and so it is a multiple of $e_1$. 
Specializing the base manifold $B$ to an elliptic curve, Proposition \ref{prop:c_m(PE)} shows that  for any line bundle $L$ on $B$, $f(\mathfrak a)$ is invariant under replacing $E$ by $E\otimes L$. 
The claim follows because no nontrivial multiple of $e_1$ has this property. 

It remains to prove (3). 
Since $|\mathfrak a|=k+1$, we have
$$
f(\mathfrak a) = \sum_{|\mathfrak d|=0} \left( \prod_{i=1}^p \binom{k-d_{i}}{k-a_{i}} e_{d_i} \right) \alpha_2 +
\sum_{|\mathfrak d|=1} \left( \prod_{i=1}^p \binom{k-d_{i}}{k-a_{i}} e_{d_i} \right) \alpha_1 + \sum_{|\mathfrak d|=2} \left( \prod_{i=1}^p \binom{k-d_{i}}{k-a_{i}} e_{d_i} \right) \alpha_0,
$$
which gives
\begin{align*}
f(\mathfrak a) &= \left( \prod_{i=1}^p \binom{k}{a_i} \right)  \left( \alpha_2 + \sum_{s=1}^p \frac{a_s}{k} e_1 \alpha_1 + \sum_{s=1}^{p} \frac{a_s(a_s-1)}{k(k-1)} e_2\alpha_0 + \sum_{ s < t } \frac{a_sa_t}{k^2}e_1^2\alpha_0 \right).
\end{align*}
Noting that 
$$
\alpha_1=-e_1 \quad \mbox{and} \quad \alpha_2=e_1^2-e_2,
$$
we can write
\begin{align*}
f(\mathfrak a) &= \left(  \prod_{i=1}^p \binom{k}{a_i} \right)  \left( \left(\sum_{ s < t}a_sa_t - \sum_{s=1}^p a_s k +k^2 \right)\frac{e_1^2}{k^2} + \left( \sum_{s=1}^p a_s(a_s-1) - k(k-1)  \right)\frac{e_2}{k(k-1)}\right) .
\end{align*}
The result follows now easily from $\sum_{s=1}^p a_s=k+1$ and $\sum_{s=1}^p a_s^2=(k+1)^2-2\sum_{s<t} a_sa_t$.
\end{proof}

In the construction of our examples, we will need the following easy estimate, which proves positivity of the constant appearing in $f(\mathfrak a)$ for $|\mathfrak a|=k+1$.

\begin{lemma} \label{lem:ineq}
Let $k\geq 2$ be an integer.
For any partition $\mathfrak a =(a_1,\ldots ,a_p)$ of $k+1$ with $0\leq a_i \leq k$ for all $i$, the expression
\begin{align} \label{eq:ineq}
\left( \prod_{i=1}^{p}{\binom{k}{a_{i}}} \right)  \cdot \left( \sum_{s<t} a_sa_t-k \right) 
\end{align}
from Lemma \ref{lem:f} is nonnegative; 
it is positive if additionally $a_i<k$ for all $i$.
\end{lemma}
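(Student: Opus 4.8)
The plan is to separate the two factors of (\ref{eq:ineq}). The first factor $\prod_{i=1}^{p}\binom{k}{a_i}$ is a product of ordinary binomial coefficients with $0\le a_i\le k$, hence strictly positive; therefore the sign of the whole expression is governed entirely by the second factor $\sum_{s<t}a_sa_t-k$, and it suffices to bound the latter from below.

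First I would rewrite this second factor in terms of $\sum_i a_i^2$. Using $2\sum_{s<t}a_sa_t=\left(\sum_i a_i\right)^2-\sum_i a_i^2$ together with $\sum_i a_i=k+1$, one obtains
\[
\sum_{s<t}a_sa_t-k=\tfrac{1}{2}\left((k+1)^2-2k-\sum_i a_i^2\right)=\tfrac{1}{2}\left(k^2+1-\sum_i a_i^2\right).
\]
Thus nonnegativity is equivalent to the elementary estimate $\sum_i a_i^2\le k^2+1$ for the sum of squares of a partition of $k+1$ all of whose parts are at most $k$.

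For the nonnegativity I would let $a_1$ denote a largest part. Since every other part is bounded by $\sum_{i\ge 2}a_i=k+1-a_1$, I get $\sum_{i\ge 2}a_i^2\le(k+1-a_1)\sum_{i\ge 2}a_i=(k+1-a_1)^2$, and hence $\sum_i a_i^2\le a_1^2+(k+1-a_1)^2$. A direct expansion shows that $a_1^2+(k+1-a_1)^2\le k^2+1$ is equivalent to $(a_1-1)(a_1-k)\le 0$, which holds because $1\le a_1\le k$ (the lower bound since the parts sum to $k+1>0$, the upper bound by hypothesis). This gives $\sum_i a_i^2\le k^2+1$, so the expression is nonnegative. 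For the strict statement, when additionally $a_i\le k-1$ for all $i$, I would instead use $a_i^2\le(k-1)a_i$ for each $i$, whence $\sum_i a_i^2\le(k-1)(k+1)=k^2-1$, so that the second factor is at least $1$ and the whole expression is strictly positive.

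The computation is elementary throughout; the only point requiring a little care is the reduction to the sum-of-squares inequality and recognizing that the extremal configuration is the partition $(k,1)$, where $\sum_i a_i^2=k^2+1$ and the expression (\ref{eq:ineq}) vanishes. I do not anticipate a genuine obstacle beyond choosing the cleanest bound for $\sum_i a_i^2$; splitting into the two estimates $a_1^2+(k+1-a_1)^2$ for nonnegativity and $(k-1)\sum_i a_i$ for strict positivity keeps both parts short.
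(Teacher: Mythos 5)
Your proof is correct, and it takes a genuinely different route from the paper's. The paper first discards all zero parts, orders the remaining ones as $1\le a_1\le\dots\le a_p\le k$, and then splits into two cases: for $p=2$ it observes that $a_1a_2-k=a_1(k+1-a_1)-k$ is a downward-opening quadratic in $a_1$ vanishing exactly at $a_1=1$ and $a_1=k$, and for $p\ge 3$ it proves strict positivity directly via the estimate $\sum_{s<t}a_sa_t\ge\sum_{s=2}^{p}a_1a_s+a_pa_{p-1}\ge\sum_{s=2}^{p}a_s+a_1=k+1>k$. You instead symmetrize using $2\sum_{s<t}a_sa_t=\bigl(\sum_i a_i\bigr)^2-\sum_i a_i^2$, which converts the claim into the single sum-of-squares inequality $\sum_i a_i^2\le k^2+1$, and you verify this for all $p$ at once by bounding the contribution of the parts other than the largest one; note that your final inequality $a_1^2+(k+1-a_1)^2\le k^2+1$, i.e.\ $(a_1-1)(a_1-k)\le 0$, is precisely the quadratic that the paper encounters only in its two-part case, so your argument effectively promotes that case to the general one and dispenses with the case distinction on $p$. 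What each approach buys: the paper's split is short because the $p\ge 3$ case is a one-line strict inequality, while yours is uniform, identifies the extremal partition $(k,1)$ where the expression vanishes, and your separate strict-positivity argument via $a_i^2\le(k-1)a_i$ even yields the quantitative refinement that the factor $\sum_{s<t}a_sa_t-k$ is at least $1$ (not merely positive) when all $a_i<k$.
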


\begin{proof}
The product $\prod_{i=1}^{p}{\binom{k}{a_{i}}}$ is positive since $0\leq a_i \leq k$ for all $i$.
It thus suffices to consider
\begin{align} \label{eq:aiaj-k}
\sum_{s<t} a_s a_t -k .
\end{align} 
Here we may ignore all $a_s$ that are zero. 
After reordering, we may therefore assume $1\leq a_1\leq a_2\leq \ldots \leq a_p\leq k$. 

If $p=2$, then 
\[
a_1\cdot a_2-k=a_1(k+1-a_1)-k 
\]
is a negatively curved quadratic equation in $a_1$ with zeros at $a_1=k$ and $a_1=1$ and so the assertion follows because $a_1=1$ implies $a_2=k$. 

If $p\geq 3$, then
\[
\sum_{s< t} a_sa_t \geq \sum_{s=2}^{p}a_1a_s +a_{p}a_{p-1}\geq \sum_{s=2}^{p}a_s +a_{1}=k+1 >k .
\]
Thus, (\ref{eq:aiaj-k}) is positive, which finishes the prove of the lemma.
\end{proof}

\section{Proof of Theorem \ref{thm:unbounded}} \label{sec:thm1}

In the notation of Section \ref{sec:Dolgachev}, for any odd integer $q \geq 3$  we have a smooth h-cobordism $W_q$ between $M_3$ and $M_q$ which induces an isomorphism $H^2(S_3, \Z) \cong H^2(S_q,\Z)$. 
Using this isomorphism, we fix a class
\[
\omega\in H^2(S_3,\Z)  \cong H^2(S_q, \Z)
\] 
of positive square. 
Since the intersection pairing on $S_3$ has type $(1,9)$, it follows that the orthogonal complement of $\omega$ is negative definite.
Hence, $G_q^2=0$ implies 
\[
\omega\cdot G_q \ne 0
\]
for all $q$.  
Via the first Chern class, each $S_q$ carries a unique holomorphic line bundle $L_q$ with $c_1(L_q) = \omega$.  

Let $C$ be a smooth curve of genus $g\geq 0$ and consider the threefold 
\[
Y_q:=S_q\times C .
\]
This threefold carries the holomorphic vector bundle
\begin{align} \label{eq:E_q}
E_q:= \left(\pr_1^\ast(L_q)\otimes \pr_2^\ast \mathcal O_C(1)\right) \oplus \mathcal O_{Y_q}^{\oplus r}
\end{align}
of rank $r+1$, where $\mathcal O_C(1)$ denotes some degree one line bundle on $C$.
The projectivization
\[
X_q:=\CP(E_q)
\]
is a smooth complex projective variety of dimension $n:=r+3$.

\begin{proposition} \label{prop:unbounded} 
If $n\geq 3$, then the oriented diffeomorphism class of the smooth manifold which underlies $X_q$ is independent of $q$.
If $n=4$, then the Chern numbers $c_1^4(X_q)$ and $c_1^2c_2(X_q)$ are unbounded in $q$.
If $n\geq 5$, then the $\mathfrak m$'s Chern number $c_{\mathfrak m}(X_q)$ is unbounded in $q$ for all partitions $\mathfrak m=(m_1,\ldots ,m_p)$ of $n$ with $1\leq m_i\leq n-2$ for all $i$.
\end{proposition}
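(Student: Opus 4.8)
For the first assertion the plan is to upgrade the smooth $h$-cobordism $W_q$ between $M_3$ and $M_q$ to one between $X_3$ and $X_q$ and then to apply the $s$-cobordism theorem. Crossing with $C$ produces a smooth $h$-cobordism $W_q\times C$ between $Y_3$ and $Y_q$. Since $\omega$ was fixed by means of the isomorphism $H^2(S_3,\Z)\cong H^2(S_q,\Z)$ induced by $W_q$, it extends to a class on $W_q$, and hence the topological complex line bundle $\pr_1^\ast L_q$ extends over $W_q\times C$; adjoining the trivial summands and the pullback of $\mathcal O_C(1)$ gives a complex vector bundle $\mathcal E$ on $W_q\times C$ restricting to $E_3$ and $E_q$ on the two ends. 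Its fibrewise projectivisation $\CP(\mathcal E)$ is a smooth $\CP^r$-bundle, and the inclusion of either end is a homotopy equivalence (being the pullback of $Y_q\hookrightarrow W_q\times C$), so $\CP(\mathcal E)$ is an $h$-cobordism between $X_3$ and $X_q$. As $\CP^r$ is simply connected, $\pi_1(\CP(\mathcal E))\cong\pi_1(C)$, whose Whitehead group vanishes; thus the cobordism is in fact an $s$-cobordism. Since $\dim_\R X_q=2n\geq6$ for $n\geq3$, the $s$-cobordism theorem provides an orientation-preserving diffeomorphism $X_3\cong X_q$.

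For the remaining assertions I would compute $c_{\mathfrak m}(X_q)$ via Proposition \ref{prop:c_m(PE)} and Lemma \ref{lem:f} and split off the part that grows with $q$. Put $k=\rk(E_q)=n-1$. The Chern data are very simple here: as $E_q$ is a line bundle plus a trivial bundle, $c_1(E_q)=\pr_1^\ast\omega+\pr_2^\ast P$ (with $P$ the point class of $C$) and $c_i(E_q)=0$ for $i\geq2$, while $c(Y_q)=\pr_1^\ast c(S_q)\cdot\pr_2^\ast c(C)$, so that every Chern class of $Y_q$ is assembled from $c_1(S_q)=(q-2)G_q$, $c_2(S_q)=12$ and $c_1(C)$. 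In Proposition \ref{prop:c_m(PE)} the summand indexed by $(j_1,\dots,j_p)$ carries $f(\mathfrak a)$ with $\mathfrak a=(m_1-j_1,\dots,m_p-j_p)$ and $|\mathfrak a|=n-\sum_t j_t$, and evaluation on the threefold $Y_q$ forces $\sum_t j_t\in\{0,1,2,3\}$, i.e. $|\mathfrak a|\in\{k+2,k+1,k,k-1\}$.

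The decisive observation is that $q$ enters only through $c_1(S_q)$ and that $c_1(S_q)^2=G_q^2=0$; hence any monomial surviving on $Y_q$ contains at most one factor of $c_1(S_q)$, necessarily paired with a degree-two class on $S_q$. The only such pairing depending on $q$ is $c_1(S_q)\cdot\omega=(q-2)(G_q\cdot\omega)$, and $\omega$ occurs solely through $c_1(E_q)^2$, that is only in $f(\mathfrak a)$ with $|\mathfrak a|=k+1$ (Lemma \ref{lem:f}(3), with $e_2=0$). Thus the growing part comes exactly from $\sum_t j_t=1$: by Lemma \ref{lem:f}(2) the case $\sum_t j_t=2$ drops out, the cases $\sum_t j_t\in\{0,3\}$ give $q$-independent constants, and expanding $c_1(Y_q)\cdot c_1(E_q)^2$ isolates the single growing contribution $2\,(c_1(S_q)\cdot\omega)\,P$. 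I therefore expect a formula
\[
c_{\mathfrak m}(X_q)=\frac{2D}{k^2}\,(q-2)\,(G_q\cdot\omega)+R_q,
\]
with $R_q$ bounded independently of $q$, where
\[
D:=\sum_{\ell=1}^{p}\Big(\prod_{i}\binom{k}{a_i^{(\ell)}}\Big)\Big(\sum_{s<t}a_s^{(\ell)}a_t^{(\ell)}-k\Big)
\]
and $\mathfrak a^{(\ell)}=(a_1^{(\ell)},\dots,a_p^{(\ell)})$ denotes $\mathfrak m$ with its $\ell$-th entry lowered by $1$. Since $G_q\cdot\omega$ is a nonzero integer, $|G_q\cdot\omega|\geq1$, so unboundedness of $c_{\mathfrak m}(X_q)$ reduces to the inequality $D\neq0$.

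It remains to verify $D>0$ for the partitions at hand. By Lemma \ref{lem:ineq} each summand of $D$ is nonnegative, and strictly positive as soon as every entry of $\mathfrak a^{(\ell)}$ is $<k$, so it suffices to exhibit one such index $\ell$. For $n\geq5$, where $1\leq m_i\leq n-2=k$, I would choose $\ell$ realising $\max_i m_i$: then $a_\ell^{(\ell)}=m_\ell-1<k$, and since at most one part can equal $k$ (the remaining parts summing to $2<k$ as $k\geq3$), every entry of $\mathfrak a^{(\ell)}$ is $<k$ and $D>0$. For $n=4$ the partitions $(1,1,1,1)$ and $(2,1,1)$, corresponding to $c_1^4$ and $c_1^2c_2$, are treated directly by the same recipe, whereas $(2,2)$ yields $D=0$, consistently with the boundedness of $c_2^2$ asserted in Theorem \ref{thm:unbounded}. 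I expect the main obstacle to be exactly this bookkeeping in the Chern-number expansion—checking that $c_1(S_q)$ appears at most linearly and only through $c_1(S_q)\cdot\omega$, so that every remaining contribution is genuinely bounded—together with the combinatorial case analysis securing $D>0$; by comparison the cobordism argument is routine once the vanishing of $\Wh(\pi_1(C))$ is granted.
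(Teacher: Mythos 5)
Your proposal is correct and takes essentially the same route as the paper's proof: the same h-/s-cobordism argument for the diffeomorphism statement (the paper uses the h-cobordism theorem for $g=0$ and the s-cobordism theorem with $\Wh(\pi_1(C))=0$ for $g\geq 1$), and the same Chern-number computation via Proposition \ref{prop:c_m(PE)}, Lemma \ref{lem:f} and Lemma \ref{lem:ineq}, with the $q$-growth isolated in $c_1(Y_q)\cdot c_1(E_q)^2=2(q-2)\,G_q\cdot\omega+(2-2g)\,\omega^2$ and the positivity of your constant $D$ secured exactly as in the paper. The only blemish is the line ``$k=\rk(E_q)=n-1$'': since $Y_q$ is a threefold and $n=r+3$, one has $k=r+1=n-2$, which is the value you in fact use everywhere afterwards.
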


\begin{proof}
We first prove the assertion concerning the diffeomorphism type of the manifold which underlies $X_q$; this part of the proof follows an argument used in \cite{kotschick-top} and \cite{kotschick-advances}.

Fix an odd integer $q \ge 3$ and consider the h-cobordism $W_q$.  
It follows from the exponential sequence for smooth functions that complex line bundles on $W_q$ are classified by $H^2(W_q,\Z)$.
Hence, we can find a complex line bundle $\mathbb L$ on $W_q$ with 
\[
c_1(\mathbb L)=\omega\in  H^2(S_3,\Z) \cong H^2(W_q,\Z) .
\]
Since the isomorphism $H^2(S_3, \Z) \cong H^2(S_q,\Z)$ is induced by $W_q$, it follows that the restriction of $\mathbb L$ to each of the boundary components of $W_q$ coincides with the complex line bundle which underlies the holomorphic line bundle $L_3$ resp.\ $L_q$ on $S_3$ resp.\ $S_q$.

Let us first consider the case $C\cong \CP^1$.
The product $W_q\times \CP^1$ is a simply connected h-cobordism between $M_3\times \CP^1$ and $M_q\times \CP^1$. 
It carries the complex vector bundle
\[
\mathbb E:=\left( \pr_1^\ast \mathbb L\otimes \pr_2^\ast \mathcal O_{\CP^1}(1)\right)\oplus \underline{\C}^{\oplus r} .
\]
The restrictions of this bundle to the boundary components of $W_q\times \CP^1$ coincide with the complex vector bundle which underlies the holomorphic vector bundle in (\ref{eq:E_q}).
Hence, the projectivization $\mathbb P(\mathbb E)$ is a simply connected h-cobordism between the simply connected oriented $2n$-manifolds which underly $X_3$ and $X_q$.
It thus follows from the h-cobordism theorem \cite{smale} that these smooth $2n$-manifolds are orientation-preserving diffeomorphic, as we claimed.

The above argument proves the first assertion in the proposition for $g=0$. 
For $g \ge 1$, one can use the s-cobordism theorem \cite{kervaire}. 
More precisely, since $\pi_1(M_q \times C)=\pi_1(C)$ and since the Whitehead group $\Wh(\pi_1(C))$ is trivial \cite[Thm.\ 1.11]{FJ90}, the s-cobordism theorem applies and we can conclude as before. 

In order to prove the second assertion, we use the computational tools given in Proposition \ref{prop:c_m(PE)} and Lemma \ref{lem:f} together with the positivity result in Lemma \ref{lem:ineq}.
Note that it suffices to compute $c_{\mathfrak m}(X_q)$ modulo all terms that do not depend on $q$.
For ease of notation, we identify cohomology classes on $S_q$ via pullback with classes on $Y_q$.
Using this notation, and fixing a point $c\in C$, we obtain
\begin{align*}
c_1(Y_q)&= c_1(S_q)+ (2-2g)\cdot[S_q\times c] , \\
c_2(Y_q)&= c_2(S_q) + (2-2g)\cdot c_1(S_q)\cdot [S_q\times c] , \\
c_3(Y_q)&= (2-2g)\cdot c_2(S_q) \cdot [S_q\times c] .
\end{align*}
In the above formulas, only $c_1(S_q)=(q-2)G_q$ depends on $q$.

In the notation of Proposition \ref{prop:c_m(PE)} and Lemma \ref{lem:f}, the rank of $E_q$ is denoted by $k=r+1$.
Recall that for any partition $\mathfrak a$ of $r+i$ the class $f(\mathfrak a)$ is a cohomology class in $H^{2i}(Y_q)$.
By Lemma \ref{lem:f}, this class is always independent of $q$, and it vanishes if additionally $i=1$.
For any partition $\mathfrak m=(m_1,\ldots ,m_p)$ of $n=r+3$ with $m_i\geq 1$ for all $i$, the $\mathfrak m$-th Chern number of $X_q$ is computed in Proposition \ref{prop:c_m(PE)}. 
Using Lemma \ref{lem:f}, we obtain 
\begin{align} \label{eq:prop:cm(PE)}
c_{\mathfrak m}(X_q) =
										 c_1(Y_q)\cdot \sum_{\mathfrak j}f(m_1-j_1,\ldots, m_p-j_p) + r(\mathfrak m) , 
\end{align} 
where $\mathfrak j=(j_1,\ldots ,j_p)$ runs through all partitions of $1$ by nonnegative integers, and where $r(\mathfrak m)$ is an integer which depends  on the partition $\mathfrak m$ of $n$ but not on $q$.
Explicitly,
\begin{align*}
r(\mathfrak m) :=& f(m_1,\ldots,m_p) + c_1(Y_q)c_2(Y_q) \cdot \sum_{\mathfrak h}f(m_1-h_1,\ldots, m_p-h_p) \\
&+ c_3(Y_q)\cdot \sum_{\mathfrak l}f(m_1-l_1,\ldots, m_p-l_p),
\end{align*}
where $\mathfrak h=(h_1,\ldots ,h_p)$ runs through all partitions of $3$ by nonnegative integers such that $h_i=2$ for one $i \in \{1,\ldots,p\}$, and $\mathfrak l=(l_1,\ldots ,l_p)$ runs through all partitions of $3$ by nonnegative integers such that $l_i=3$ for one $i \in \{1,\ldots,p\}$.
In this calculation we used that $c_1(Y_q)^3=0$ and that the formula for $c_{\mathfrak m}(X_q)$ has no nontrivial contribution by terms of the form $c_1(Y_q)^2\cdot f(\mathfrak a)$ or $c_2(Y_q)\cdot f(\mathfrak a)$, since $f(\mathfrak a)$ vanishes when $\mathfrak a$ has weight $|\mathfrak a|=k$, see Lemma \ref{lem:f}. 
In order to see that $r(\mathfrak m)$ does indeed not depend on $q$, it suffices to note that the terms $f(\mathfrak a)$, $c_1(Y_q)c_2(Y_q)$ and $c_3(Y_q)$ are all independent of $q$. 

By construction of $E_q$, we have $c_2(E_q)=0$ and
\[
c_1(E_q)=\omega + [S_q\times c] .
\]
This implies
\[
c_1(Y_q)\cdot c_1(E_q)^2=2(q-2)G_q\cdot \omega \cdot [S_q\times c] +(2-2g)\omega^2 \cdot [S_q\times c].
\]
This number is unbounded in $q$ since $G_q\cdot \omega$ is nonzero for all $q$ and the second summand does not depend on $q$.
It follows from Lemmas \ref{lem:f} and \ref{lem:ineq} that (\ref{eq:prop:cm(PE)}) is unbounded in $q$ as long as one of the partitions
\[
\mathfrak a:=(m_1-j_1,\ldots, m_p-j_p)
\]
that appears in (\ref{eq:prop:cm(PE)}) satisfies $m_i-j_i<k=n-2$.

If $n >4$, then this condition is equivalent to $m_i\leq n-2$ for all $i$. 

If $n=4$, then the above condition is only satisfied for $c_1^4$ and $c_1^2c_2$, as we want in the proposition.
\end{proof}

\begin{proof}[Proof of Theorem \ref{thm:unbounded}]
Recall that the Chern numbers $c_n$ and $c_1c_{n-1}$ are linear combinations of Hodge numbers \cite[Prop.\ 2.3]{LW90}, which on K\"ahler manifolds are bounded in terms of the Betti numbers of the underlying smooth manifold. 
Therefore, if $n\geq 5$, the theorem follows from Proposition \ref{prop:unbounded}.

In complex dimension $n=4$, the second Pontryagin number is given by
\begin{align} \label{eq:p2}
p_2=c_2^2 -2c_1c_3 + 2c_4.
\end{align}
This number depends only on the underlying oriented smooth $8$-manifold; changing the orientation changes $p_2$ by a sign.
Since $c_1c_3$ and $c_4$ are already known to be bounded by the underlying smooth manifold, the same conclusion holds for $c_2^2$. 
By Proposition \ref{prop:unbounded}, $c_1^4$ and $c_1^2c_2$ are unbounded, which finishes the proof of Theorem \ref{thm:unbounded}.
\end{proof}

\begin{remark} \label{rem:dimB=2} 
It easily follows from item (2) in Lemma \ref{lem:f} that the Chern numbers of a projective bundle over any surface remain bounded while changing the algebraic structure of the base. 
This explains why in our approach we had to use a base of dimension at least three.  
\end{remark}

\begin{remark} \label{rem:3&4folds}
The examples used in the proof of Theorem \ref{thm:unbounded} are ruled and so they have negative Kodaira dimension.
This compares to an observation of Kotschick which implies that in dimensions three and four the Chern numbers of a minimal projective manifold of general type are bounded by the underlying smooth manifold. 
Using the Miyaoka--Yau inequality, this boundedness statement was proven by Kotschick \cite[p.\ 522 and p.\ 525]{kotschick-top} under the stronger assumption of ample canonical class.
His argument applies because the inequality used holds more generally for arbitrary minimal projective manifolds of general type \cite{tsuji,zhang}.
\end{remark}

\begin{remark} \label{rem:friedman-morgan}
Koll\'ar \cite[Thm.\ 4.2.3]{kollar} proved that on a smooth manifold with $b_2=1$, the set of deformation equivalence classes of algebraic structures is finite, hence the Chern numbers are bounded.
Conversely, it was observed by Friedman--Morgan \cite{FM-bulletin} that the self-product of a Dolgachev surface yields an example of a smooth $8$-manifold where the set of deformation equivalence classes of algebraic structures is infinite because the order of divisibility of the canonical class can become arbitrarily large.
The Chern numbers of these examples are however bounded. 
\end{remark}

\section{Some applications}
The following corollary combines Theorem \ref{thm:unbounded} with LeBrun's examples \cite{lebrun}.

\begin{corollary} \label{cor:cx:unbounded}
In complex dimension $n\geq 4$, the topological Euler number $c_n$ is the only Chern number which on complex manifolds is bounded by the underlying smooth manifold.
\end{corollary}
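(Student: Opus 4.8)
The plan is to combine two families of complex structures on fixed smooth manifolds. First, the topological Euler number $c_n$ is a homeomorphism invariant, hence certainly bounded by the underlying smooth manifold; it therefore suffices to prove that every Chern number $c_{\mathfrak m}$ with $\mathfrak m\neq(n)$ is unbounded on complex manifolds sharing a fixed underlying smooth manifold. I would split the partitions $\mathfrak m$ of $n$ into those all of whose parts are at most $n-2$, and the remaining ones. A partition with a part equal to $n$ is only $(n)$, which is excluded; a partition with a part equal to $n-1$ has its remaining parts summing to $1$, so it is exactly $(1,n-1)$, i.e.\ the Chern number $c_1c_{n-1}$.

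For the first family I would simply invoke Proposition \ref{prop:unbounded}: the projective bundles $X_q$ are in particular complex manifolds, they all carry the same underlying smooth structure, and by that proposition $c_{\mathfrak m}(X_q)$ is already unbounded whenever $1\le m_i\le n-2$ for all $i$ (in dimension four this covers $c_1^4$ and $c_1^2c_2$). The partitions left to treat are therefore $(1,n-1)$ for every $n\ge5$, together with $c_2^2$ and $c_1c_3$ in the special case $n=4$; by Theorem \ref{thm:unbounded} the latter two are bounded in the projective category, so they require a genuinely non-K\"ahler input.

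This input is provided by LeBrun's examples \cite{lebrun}: a fixed smooth six-manifold $N$ carrying complex structures $N_s$ with $c_1c_2(N_s)$ unbounded, while $c_3(N_s)=\chi(N)$ is fixed. I would form the complex $n$-folds $N_s\times\CP^{n-3}$, which all share the underlying smooth manifold $N\times\CP^{n-3}$. Expanding the total Chern class by the Whitney formula and extracting the top-degree component, a short degree count shows that in each of the relevant Chern numbers every contribution either lives in too high a cohomological degree on one of the two factors and vanishes, or is independent of $s$, except for a single surviving term proportional to $(c_1c_2)(N_s)$. For instance one finds
\[
c_1c_{n-1}(N_s\times\CP^{n-3})=\chi(\CP^{n-3})\cdot(c_1c_2)(N_s)+(\text{terms independent of }s),
\]
and likewise $c_2^2$ and $c_1c_3$ of $N_s\times\CP^1$ are, up to $s$-independent terms, nonzero multiples of $(c_1c_2)(N_s)$. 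Since $\chi(\CP^{n-3})=n-2\neq0$, these Chern numbers are unbounded in $s$, which completes the list.

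The computations are routine, so the main work is organizational: verifying that the two families together leave only $(n)$ untreated, and isolating the unique $s$-dependent term in the product expansion. The one point requiring care is the degree bookkeeping on the three-dimensional factor $N_s$ — terms such as $c_1c_3(N_s)$, or $c_1(N_s)^2$ times a class of positive degree on $\CP^{n-3}$, must be discarded for dimension reasons — together with the observation that we only ever use LeBrun's control of $c_1c_2$ and the topological invariance of $c_3$, and never any information about $c_1^3(N_s)$.
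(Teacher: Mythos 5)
Your proposal is correct and follows essentially the same route as the paper: boundedness of $c_n$, Proposition \ref{prop:unbounded} (equivalently Theorem \ref{thm:unbounded}) for the partitions with all parts at most $n-2$, and LeBrun's examples multiplied by projective spaces to handle $c_1c_{n-1}$ together with $c_2^2$ and $c_1c_3$ in dimension four. The only cosmetic difference is that you take a single product with $\CP^{n-3}$ and compute directly, where the paper crosses with $\CP^1$ factors inductively; the case analysis and the degree bookkeeping in your computation are accurate.
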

\begin{proof}
The Chern number $c_n$ is clearly bounded by the underlying topological space.

Conversely, LeBrun \cite{lebrun} showed that there is a sequence $(Y_m)_{m\geq 1}$ of complex structures on the $6$-manifold $S^2\times M$, where $M$ denotes the $4$-manifold which underlies a complex K3 surface, such that $c_1c_2(Y_m)$ is unbounded, whereas $c_1^3(Y_m)$ and $c_3(Y_m)$ are both bounded.
It follows by induction on $n$ that $Y_m\times (\CP^1)^{n-3}$ has unbounded $c_1c_{n-1}$.
One also checks that $c_2^2(Y_m\times \CP^1)$ is unbounded.
This finishes the proof of Corollary \ref{cor:cx:unbounded} by Theorem \ref{thm:unbounded}. 
\end{proof}

It is not known whether on complex manifolds $c_1^3$ is bounded by the underlying smooth manifold.
As in the case of smooth complex projective varieties, $c_1^3$ is the only Chern number where unboundedness remains open. 
We emphasize however that Corollary \ref{cor:cx:unbounded} talks only about Chern numbers $c_{\mathfrak m}$ and not about their linear combinations.
The boundedness question for linear combinations of Chern numbers of complex manifolds remains open in general, but a partial result can be deduced from Section \ref{sec:upperbound} below.

The next two corollaries generalize an observation of Kotschick \cite[Rem.\ 20]{kotschick-advances}, asserting that the Chern number $c_1^n$ in dimension $n\geq 3$ does not lie in the span of the Euler characteristics $\chi^p$.

\begin{corollary} \label{cor:span1} 
A Chern number $c_{\mathfrak m}$ lies in the span of the Euler characteristics $\chi^p$ and the Pontryagin numbers if and only if  
\[
c_{\mathfrak m}\in \left\{c_1c_{n-1},c_n\right\} \ \ \text{or}\ \ c_{\mathfrak m}\in \left\{c_2^2,c_1c_3,c_4\right\} .
\] 
\end{corollary}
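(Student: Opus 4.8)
The plan is to treat the two implications separately, deriving the forward direction from Theorem~\ref{thm:unbounded} and the backward direction from explicit identities.

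The key observation linking the two sides is that any linear combination of the $\chi^p=\chi(X,\Omega_X^p)$ and the Pontryagin numbers is determined up to finite ambiguity by the underlying smooth manifold: the $\chi^p=\sum_q(-1)^q h^{p,q}$ are alternating sums of Hodge numbers, which on K\"ahler manifolds are bounded by the Betti numbers and so take only finitely many values, while the Pontryagin numbers are oriented diffeomorphism invariants. Granting this, the ``only if'' direction is immediate in complex dimension $n\geq 4$: if $c_{\mathfrak m}$ lies in the span, it is determined up to finite ambiguity, whence Theorem~\ref{thm:unbounded} forces $c_{\mathfrak m}\in\{c_n,c_1c_{n-1}\}$ for $n\geq 5$ and $c_{\mathfrak m}\in\{c_4,c_1c_3,c_2^2\}$ for $n=4$.

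For the ``if'' direction I would exhibit each listed class as such a combination. The Euler number satisfies $c_n=\sum_{p=0}^n(-1)^p\chi^p$. For $c_1c_{n-1}$ I would invoke the Libgober--Wood identity \cite{LW90}, which expresses the second moment $\sum_{p}(-1)^p(p-\tfrac{n}{2})^2\chi^p$ as a rational combination of $c_n$ and $c_1c_{n-1}$ with nonzero coefficient on the latter (the first moment vanishes identically by Serre duality, which is why the second is used); since $c_n$ already lies in the span, solving for $c_1c_{n-1}$ places it there too. In complex dimension four the relation $p_2=c_2^2-2c_1c_3+2c_4$ from (\ref{eq:p2}) rearranges to $c_2^2=p_2+2c_1c_3-2c_4$, so $c_2^2$ lies in the span because $p_2$ is a Pontryagin number and $c_1c_3=c_1c_{n-1}$, $c_4=c_n$ have just been placed there. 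The entries $c_4=c_n$ and $c_1c_3=c_1c_{n-1}$ are the dimension-four specializations of the general cases.

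It remains to treat the ``only if'' direction in the low dimensions not covered by Theorem~\ref{thm:unbounded}. For $n\leq 2$ every Chern number already lies in the listed set, so nothing is to prove. For $n=3$ there are no Pontryagin numbers, and the only Chern number outside $\{c_3,c_1c_2\}$ is $c_1^3$; here Serre duality gives $\chi^p=-\chi^{3-p}$, so only $\chi^0,\chi^1$ are independent and the $\chi^p$ span at most a two-dimensional space of functionals. Since $c_1c_2=24\chi^0$ and $c_3=2\chi^0-2\chi^1$ already span this space, and $c_1^3$ is linearly independent from $c_1c_2$ and $c_3$ in the space spanned by the Chern numbers $c_3, c_1c_2, c_1^3$, it follows that $c_1^3$ is not in the span, recovering Kotschick's observation \cite[Rem.\ 20]{kotschick-advances}. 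The main obstacle is the ``if'' direction, specifically the claim that $c_1c_{n-1}$ genuinely lies in the span of the $\chi^p$ rather than merely being some linear combination of Hodge numbers; this is exactly the arithmetic content of the Libgober--Wood identity and is the only step requiring a real computation. By contrast the ``only if'' direction is, in dimension $\geq 4$, a purely formal consequence of Theorem~\ref{thm:unbounded} once boundedness of the $\chi^p$ and diffeomorphism-invariance of the Pontryagin numbers are recorded.
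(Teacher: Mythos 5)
Your proposal is correct and takes essentially the same route as the paper: the ``only if'' direction for $n\geq 4$ is deduced from Theorem~\ref{thm:unbounded} via the (paper-implicit) observation that every element of the span of the $\chi^p$ and Pontryagin numbers is determined up to finite ambiguity, the ``if'' direction rests on the Libgober--Wood identity for $c_1c_{n-1}$ together with $c_n=\sum_p(-1)^p\chi^p$ and the relation $p_2=c_2^2-2c_1c_3+2c_4$, and the $n=3$ case is settled by noting that the $\chi^p$ span only $c_1c_2$ and $c_3$ while there are no Pontryagin numbers. The paper's proof is a terser version of exactly this argument, with your write-up supplying the details (boundedness of the span, the explicit identities, and the linear independence of Chern numbers as functionals on $\Omega_n^U\otimes\Q$) that the paper leaves implicit.
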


\begin{proof} 
The assertion is clear for $n\leq 2$, and it follows for $n=3$ because the space of the Euler characteristics $\chi^p$ is spanned by $c_1c_2$ and $c_3$, and there are no Pontryagin numbers.
If $n\geq 4$, then it follows immediately from Theorem \ref{thm:unbounded} and the fact that $c_1c_{n-1}$ and $c_n$ lie in the span of the Euler characteristics $\chi^p$, and $c_2^2$ lies in the span of the Euler characteristics and Pontryagin numbers in dimension four.
\end{proof}

\begin{corollary} \label{cor:span2}
The Chern numbers $c_1c_{n-1}$ and $c_n$ are the only Chern numbers that lie in the span of the $\chi^p$'s. 
No Chern number in even complex dimensions lies in the span of the Pontryagin numbers.
\end{corollary}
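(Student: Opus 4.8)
The plan is to treat the two assertions separately, working throughout in the graded ring $R=\Q[c_1,c_2,\ldots]$ with $\deg c_i=i$, whose degree-$n$ part $R_n$ has the Chern monomials $c_{\mathfrak m}$, one for each partition $\mathfrak m$ of $n$, as a basis. Since the rational complex bordism class of a stably almost complex manifold is detected by its Chern numbers, the property of a characteristic number ``lying in the span of the $\chi^p$'s'' (resp.\ of the Pontryagin numbers) is exactly membership in the corresponding subspace of $R_n$. I would therefore phrase everything as linear algebra in $R_n$, identifying $c_i$ with the $i$-th elementary symmetric polynomial in formal Chern roots $x_1,x_2,\ldots$.

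I would dispose of the Pontryagin statement first, as it is the cleaner of the two and needs nothing beyond this set-up. Because the total Pontryagin class equals $\prod_i(1+x_i^2)$, each $p_k$ is the $k$-th elementary symmetric polynomial in the squares $x_1^2,x_2^2,\ldots$; hence every monomial $p_{k_1}\cdots p_{k_s}$, and so every linear combination of Pontryagin numbers, is a polynomial in the $x_i^2$ and is invariant under each individual sign change $x_j\to -x_j$. The key observation is then that no Chern monomial shares this invariance: in $c_{\mathfrak m}=\prod_t c_{m_t}$ the squarefree monomial $x_1\cdots x_n$ occurs with the positive multinomial coefficient $\binom{n}{m_1,\ldots,m_p}$, and it is not fixed by $x_1\to -x_1$. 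As sign-invariance is inherited by linear combinations, $c_{\mathfrak m}$ cannot lie in the span of the Pontryagin numbers. (In odd complex dimension there are no Pontryagin numbers at all, which is why the statement is restricted to even dimensions.)

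For the statement about the $\chi^p$'s I would reduce to a single computation by invoking Corollary \ref{cor:span1}. The span of the $\chi^p$'s is contained in the span of the $\chi^p$'s and the Pontryagin numbers, so any Chern number lying in it is, by Corollary \ref{cor:span1}, one of $c_1c_{n-1}$, $c_n$ or, only in dimension four, one of $c_2^2$, $c_1c_3$, $c_4$. Now $c_1c_{n-1}$ and $c_n$ genuinely lie in the span of the $\chi^p$'s by the Libgober--Wood formula \cite{LW90}, and in dimension four these are precisely $c_1c_3$ and $c_4$. The whole statement therefore comes down to showing that in complex dimension four $c_2^2$ does \emph{not} lie in the span of $\chi^0,\chi^1,\chi^2$.

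This last point is where the real work lies. I would compute the three-dimensional subspace of $R_4$ spanned by $\chi^0,\chi^1,\chi^2$, using the degree-four Todd polynomial for $\chi^0$, the Hirzebruch signature theorem for the value $\chi_1=\sigma$ of the genus $\chi_y=\sum_p\chi^p y^p$, the Euler number $\chi_{-1}=c_4$, and Serre duality $\chi^p=\chi^{4-p}$. In the basis $(c_1^4,c_1^2c_2,c_2^2,c_1c_3,c_4)$ this span turns out to be generated by $-c_1^4+4c_1^2c_2+3c_2^2$, $c_1c_3$ and $c_4$; in every element of it the coefficient of $c_2^2$ equals $-3$ times that of $c_1^4$, so the vector $c_2^2$ (with vanishing $c_1^4$-coefficient) is not contained in it. The main obstacle is precisely this dimension-four calculation: $c_2^2$ does belong to the larger span of Corollary \ref{cor:span1}, but only because the Pontryagin number $p_1^2=c_1^4-4c_1^2c_2+4c_2^2$ is available there, and the argument must carefully separate the $\chi^p$-contribution from the Pontryagin contribution in order to see that $c_2^2$ falls outside the former.
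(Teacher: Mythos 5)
Your proposal is correct, but it splits into two halves of different character. For the first assertion you argue exactly as the paper does: reduce via Corollary \ref{cor:span1} to the single question of whether $c_2^2$ lies in the span of $\chi^0,\chi^1,\chi^2$ in dimension four, and then identify that span as generated by $c_4$, $c_1c_3$ and $3c_2^2+4c_1^2c_2-c_1^4$ --- the same basis the paper records (the paper states it as an observation; your Todd/signature/Serre-duality computation is the standard way to verify it, and your coefficient relation argument excluding $c_2^2$ is exactly the paper's point). For the Pontryagin assertion, however, your route is genuinely different. The paper argues topologically: since Pontryagin numbers are diffeomorphism invariants, Corollary \ref{cor:cx:unbounded} leaves $c_n$ as the only candidate in dimension $n\geq 4$, which is then excluded by \cite[Thm.\ 5]{kotschick-PNAS} and the fact that the signature is not a multiple of $c_n$; dimension $n=2$ gets a separate check via $p_1=c_1^2-2c_2$. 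You instead argue algebraically with Chern roots: any polynomial in the Pontryagin classes is symmetric in the $x_i^2$, hence invariant under each sign change $x_j \to -x_j$, while every Chern monomial $c_{\mathfrak m}$ contains the square-free monomial $x_1\cdots x_n$ with the positive coefficient $\binom{n}{m_1,\ldots,m_p}$, which is anti-invariant under $x_1 \to -x_1$; this is legitimate because Chern numbers form a basis of the dual of $\Omega_n^U\otimes \Q$ and rational cobordism classes of smooth projective varieties span $\Omega_n^U\otimes \Q$, so identities among these characteristic numbers are identities of symmetric polynomials. Your argument is more elementary and self-contained: it needs neither the unboundedness machinery (LeBrun's non-K\"ahler examples enter the paper's proof through Corollary \ref{cor:cx:unbounded}) nor the external result of Kotschick, and it treats all even dimensions, including $n=2$, uniformly. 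What the paper's route buys is economy --- it is a two-line consequence of results the paper has already established --- and it displays the non-membership statement as a corollary of the unboundedness theorem itself.
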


\begin{proof}
The fact that $c_1c_{n-1}$ and $c_n$ are the only Chern numbers that lie in the span of the $\chi^p$'s follows from Corollary \ref{cor:span1} and the observation that in dimension $n=4$, the span of the Euler characteristics $\chi^p$ has a basis given by $c_4$, $c_1c_3$ and $3c_2^2+4c_1^2c_2-c_1^4$, and so it does not contain $c_2^2$.

The assertion about the Pontryagin numbers in dimension $n=2$ follows from $p_1=c_1^2-2c_2$.
For $n\geq 4$, it suffices by Corollary \ref{cor:cx:unbounded} to show that $c_n$ is not a Pontryagin number.
This follows for example from \cite[Thm.\ 5]{kotschick-PNAS} and the fact that the signature is not a multiple of $c_n$.  
\end{proof}

\section{On the space of bounded linear combinations} \label{sec:upperbound}

In this section we give an upper bound on the dimension of the space of linear combinations of Chern numbers of smooth complex projective varieties that are bounded by the underlying smooth manifold.
For this purpose we determine the complex cobordism classes of the manifolds $X_q$ constructed in Section \ref{sec:thm1} in terms of suitable generators of $\Omega_\ast^U\otimes \Q$. 
This approach is based on the fact that in complex dimension $n$, the Chern numbers are complex cobordism invariants which form a basis of the dual space of $\Omega_n^U\otimes \Q$, see \cite[p.\ 117]{stong}.

Consider the elements $\alpha_1:=\CP^1$, $\alpha_2:=\CP^2$ and
\[
\alpha_n:=\CP(\mathcal O_{A}(1)\oplus \mathcal O_A^{n-2}) ,
\]
where $A$ denotes an abelian surface and $\mathcal O_A(1)$ denotes some ample line bundle on $A$. 
It follows from Lemma 2.3 in \cite{schreieder} that the Milnor number $s_n(\alpha_n)$ is nonzero.
By the structure theorem of Milnor and Novikov \cite[p.\ 128]{stong}, $(\alpha_n)_{n\geq 1}$ is therefore a sequence of generators of the complex cobordism ring with rational coefficients.  
That is, 
\[
\Omega_\ast^U\otimes \Q \cong \Q[\alpha_1,\alpha_2,\ldots] .
\]
Let us consider the bundle $E_q$ on $Y_q$ of rank $n-2$ and the corresponding $n$-dimensional projective bundle $X_q:=\CP(E_q)$ from Section \ref{sec:thm1}.  

\begin{proposition} \label{prop:X_q} 
There is an unbounded function $g_n(q)$ in $q$ such that the following identity holds in $\Omega_n^U\otimes \Q$:
\[
X_q=g_n(q)\cdot \alpha_1\alpha_{n-1} + \epsilon ,
\] 
where $\epsilon \in \Omega_{n}^{U}\otimes \Q$ denotes a rational cobordism class which does not depend on $q$.
\end{proposition}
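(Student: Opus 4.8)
The plan is to express the complex cobordism class of $X_q=\CP(E_q)$ in terms of the polynomial generators $\alpha_i$ by computing enough of its Chern numbers and matching them against the Chern numbers of monomials in the $\alpha_i$. Since the Chern numbers form a basis of the dual of $\Omega_n^U\otimes\Q$, the class $X_q$ is completely determined by the list $(c_{\mathfrak m}(X_q))_{\mathfrak m}$, so it suffices to identify the coefficient of $\alpha_1\alpha_{n-1}$ in the decomposition $X_q=\sum_{\mathfrak p}\lambda_{\mathfrak p}(q)\,\alpha^{\mathfrak p}$ over monomials $\alpha^{\mathfrak p}=\alpha_{p_1}\cdots\alpha_{p_k}$ with $\sum p_i=n$, and to show that exactly this coefficient carries the unbounded dependence on $q$.

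First I would recall from the proof of Proposition \ref{prop:unbounded} precisely how the $q$-dependence enters: every Chern number $c_{\mathfrak m}(X_q)$ is, by equation (\ref{eq:prop:cm(PE)}), of the form $c_1(Y_q)\cdot(\text{combination of }f(\mathfrak a)\text{'s})+r(\mathfrak m)$, where $r(\mathfrak m)$ is independent of $q$ and the only $q$-dependent quantity is $c_1(S_q)=(q-2)G_q$, which contributes through $c_1(Y_q)\cdot c_1(E_q)^2=2(q-2)\,G_q\cdot\omega\cdot[S_q\times c]+\text{const}$. Thus the function of $q$ attached to each Chern number is \emph{affine linear} in $q$, with a common slope proportional to $G_q\cdot\omega\neq 0$ (times a combinatorial constant depending on $\mathfrak m$ coming from Lemmas \ref{lem:f} and \ref{lem:ineq}). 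Writing $g_n(q)$ for this shared linear-in-$q$ part, one gets $X_q=g_n(q)\cdot\beta+\epsilon$ in $\Omega_n^U\otimes\Q$ for a fixed cobordism class $\beta$ independent of $q$ and a fixed remainder $\epsilon$; the whole problem reduces to identifying $\beta$.

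The key computation is therefore to pin down $\beta$ by evaluating which Chern numbers have nonzero slope in $q$. From Proposition \ref{prop:unbounded}, the slope is nonzero exactly when some part $\mathfrak a=(m_1-j_1,\ldots,m_p-j_p)$ appearing in (\ref{eq:prop:cm(PE)}) has an entry strictly less than $k=n-2$, i.e. when the partition $\mathfrak m$ avoids the maximal part $n-1$ in a suitable sense; one reads off that $\beta$ pairs nontrivially against precisely the Chern numbers detecting the generator $\alpha_{n-1}=\CP(\OO_A(1)\oplus\OO_A^{n-3})$ multiplied by $\alpha_1=\CP^1$. Concretely, I would compute the relevant few Chern numbers of $\alpha_1\alpha_{n-1}$ using Proposition \ref{prop:c_m(PE)} and Lemma \ref{lem:f} applied to the abelian-surface base (where $c_1=c_2=0$, so the formulas collapse), and check that the slope vector of $X_q$ is a nonzero multiple of the Chern-number vector of $\alpha_1\alpha_{n-1}$ modulo the span of all other monomials. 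Because distinct monomials $\alpha^{\mathfrak p}$ have linearly independent Chern-number vectors (they form a basis of $\Omega_n^U\otimes\Q$), this matching determines the coefficient of $\alpha_1\alpha_{n-1}$ uniquely and shows it equals $g_n(q)$ up to a fixed nonzero scalar, which can be absorbed into $g_n$.

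The main obstacle will be the last matching step: verifying that the $q$-slope of $X_q$ is supported \emph{exactly} on $\alpha_1\alpha_{n-1}$ and on no other monomial. The delicate point is that a priori the slope could also feed into other basis monomials such as $\alpha_1^2\alpha_{n-2}$ or $\alpha_3\alpha_{n-3}$. To rule this out I would use the structure of $Y_q=S_q\times C$ and $E_q$: the relations $c_2(E_q)=0$, $c_1(Y_q)^3=0$, $c_1(S_q)^2=(q-2)^2G_q^2=0$ (since $G_q^2=0$), and the vanishing $f(\mathfrak a)=0$ for $|\mathfrak a|=k$ from Lemma \ref{lem:f}(\ref{item:weight=k}) force the $q$-linear contribution to factor through a single class $c_1(Y_q)\cdot c_1(E_q)^2$, which geometrically is a ruled threefold times a point in the fibre direction—exactly the cobordism-theoretic signature of $\alpha_1\alpha_{n-1}$ and not of any other decomposable class. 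Making this "exactly one monomial" claim rigorous, rather than merely "some multiple of $\alpha_1\alpha_{n-1}$ plus lower terms", is where the real care is needed, and I expect it to rely on a clean use of the Milnor-number functional $s_n$ together with the multiplicativity of Chern numbers under products of the $\alpha_i$.
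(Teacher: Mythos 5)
Your proposal follows essentially the same route as the paper's proof: isolate the $q$-dependent part of every Chern number via (\ref{eq:prop:cm(PE)}), use the duality between Chern numbers and $\Omega_n^U\otimes\Q$ to write $X_q=u(q)\cdot\beta+\epsilon$ with $\epsilon$ constant in $q$, and identify $\beta$ as a multiple of $\alpha_1\alpha_{n-1}$ by computing the Chern numbers of $\alpha_1\alpha_{n-1}$ through Proposition \ref{prop:c_m(PE)} and Lemma \ref{lem:f}. The exact-proportionality step you flag as delicate is resolved in the paper without any Milnor-number argument: one realizes $\alpha_1\alpha_{n-1}$ as $\CP(\pr_2^\ast\mathcal O_{A}(1)\oplus\mathcal O_B^{n-3})$ over the threefold $B=\CP^1\times A$, i.e.\ as a projective bundle of the same rank $k=n-2$ with $e_2=0$ over a base of the same dimension as $(Y_q,E_q)$, so that the slope vector of $X_q$ and the vector $\bigl(c_{\mathfrak m}(\alpha_1\alpha_{n-1})\bigr)_{\mathfrak m}$ are both equal to the same combinatorial sum $\sum_{|\mathfrak j|=1}f(m_1-j_1,\ldots,m_p-j_p)$ multiplied by degree-two classes independent of $\mathfrak m$, which gives proportionality term by term.
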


\begin{proof}
Let $\mathfrak m$ be a partition of $n$.
By (\ref{eq:prop:cm(PE)}) and since $c_1(Y_q)=c_1(S_q)+(2-2g) [S_q\times c]$, we have 
\begin{align} \label{eq:c_m(Xq)}
c_{\mathfrak m}(X_q)=\sum_{|\mathfrak j|=1} c_{1}(S_q)\cdot   f(m_{1}-j_{1}, \ldots ,m_{p}-j_{p}) + r'(\mathfrak m),
\end{align}
where
$$
r'(\mathfrak m):=\sum_{|\mathfrak j|=1} (2-2g) [S_q\times c]\cdot   f(m_{1}-j_{1}, \ldots ,m_{p}-j_{p})+ r(\mathfrak m)
$$ 
is an integer which does not depend on $q$; in both summations, $\mathfrak j=(j_1,\ldots ,j_p)$ runs through all partitions of $1$  by nonnegative integers.

We now aim to compare the Chern numbers of $X_q$ with those of $\alpha_1 \alpha_{n-1}$.
To this end, let us consider the product $B:=\CP^1\times A$ together with the vector bundle
$
\pr_2^\ast \mathcal O_{A}(1)\oplus \mathcal O_B^{n-3} 
$. 
The projectivization 
\[
\CP(\pr_2^\ast \mathcal O_{A}(1)\oplus \mathcal O_B^{n-3})
\] 
has class $\alpha_1\alpha_{n-1}$ in $\Omega_\ast^U$.
By Proposition \ref{prop:c_m(PE)} we find
\[
c_{\mathfrak m}(\CP(\pr_2^\ast \mathcal O_{A}(1)\oplus \mathcal O_B^{n-3}))=f(m_{1}, \ldots ,m_{p}) + \sum_{|\mathfrak j|=1} c_{1}(B)\cdot   f(m_{1}-j_{1}, \ldots ,m_{p}-j_{p}) ,
\]
because $c_i(A)=0$ for all $i\geq 1$.
In the above calculation, $f(m_{1}, \ldots ,m_{p})$ is a cohomology class of degree $6$ which is actually a pullback from the second factor of $B$ and hence vanishes.
Comparing the above result with (\ref{eq:c_m(Xq)}) therefore proves
$$
c_{\mathfrak m}(X_q)=g_n(q)\cdot c_{\mathfrak m}(\alpha_1 \alpha_{n-1})+r'(\mathfrak m) ,
$$
for some rational number $g_n(q)$ which depends on $q$.
Since $r'(\mathfrak m)$ does not depend on $q$, it follows from Proposition \ref{prop:unbounded} that $g_n(q)$ is unbounded in $q$.

Since the Chern numbers in dimension $n$ form a basis of the dual space of $\Omega_{n}^{U}\otimes \Q $, there is a cobordism class $\epsilon \in \Omega_{n}^{U}\otimes \Q$ with $c_{\mathfrak m}(\epsilon)=r'(\mathfrak m)$ for all partitions $\mathfrak m$ of $n$.
Since $r'(\mathfrak m)$ does not depend on $q$, the same holds true for $\epsilon$.
Using the duality between the Chern numbers and $\Omega_{n}^{U}\otimes \Q $ once again, we deduce the identity
$$
X_q=g_n(q)\cdot \alpha_1\alpha_{n-1}+\epsilon
$$
in $\Omega_{n}^{U}\otimes \Q $.
This finishes the proof of the proposition, since $g_n(q)$ is unbounded in $q$. 
\end{proof}

Let us now consider the graded ideal
\[
\mathcal I^\ast:=\left\langle \alpha_1\alpha_k \mid k\geq 3\right\rangle
\]
in $\Omega_\ast^U\otimes \Q$.
By Proposition \ref{prop:X_q}, any linear combination of Chern numbers in dimension $n$ which on smooth complex projective varieties is bounded by the underlying smooth manifold vanishes on $\mathcal I^n$ and hence descends to the quotient $(\Omega_n^U\otimes \Q)/\mathcal I^n$.
Denoting by $p(n)$ the number of partitions of $n$ by positive natural numbers, we get the following. 

\begin{corollary} \label{cor:upperbound}
In dimension $n\geq 4$, the space of linear combinations of Chern numbers which on smooth complex projective varieties are bounded by the underlying smooth manifold has dimension at most 
\[
\dim(\Omega_n^U\otimes \Q)-\dim(\mathcal I^n)=p(n)-p(n-1)+\left\lfloor \frac{n+1}{2}\right\rfloor .
\]
\end{corollary}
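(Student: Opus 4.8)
The plan is to compute the two dimensions appearing in the claimed equality separately and then verify the final combinatorial identity. The dimension of $\Omega_n^U \otimes \Q$ is immediate: since $\Omega_\ast^U \otimes \Q \cong \Q[\alpha_1, \alpha_2, \ldots]$ is a polynomial ring with one generator in each degree, the degree-$n$ part has a basis indexed by partitions of $n$, so $\dim(\Omega_n^U \otimes \Q) = p(n)$. The substance of the proof therefore lies in computing $\dim(\mathcal I^n)$, where $\mathcal I^\ast = \langle \alpha_1 \alpha_k \mid k \geq 3 \rangle$. The first step is to describe a basis of $\mathcal I^n$ as a rational vector space. Since we work in a polynomial ring, $\mathcal I^n$ is spanned by the degree-$n$ monomials $\alpha_1^{a_1}\alpha_2^{a_2}\alpha_3^{a_3}\cdots$ that are divisible by some $\alpha_1 \alpha_k$ with $k \geq 3$; equivalently, by those monomials which contain at least one factor $\alpha_1$ and at least one factor $\alpha_k$ with $k \geq 3$. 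Because distinct monomials are linearly independent, counting such monomials directly gives $\dim(\mathcal I^n)$.

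The cleanest route is to compute the codimension $p(n) - \dim(\mathcal I^n)$, i.e.\ to count the degree-$n$ monomials that are \emph{not} in $\mathcal I^n$. A monomial $\prod_i \alpha_i^{a_i}$ of weight $n$ fails to lie in $\mathcal I^n$ exactly when it is not divisible by any $\alpha_1\alpha_k$ with $k\geq 3$, which happens in precisely two mutually exclusive ways: either $a_1 = 0$ (no factor of $\alpha_1$ at all), or $a_k = 0$ for all $k \geq 3$ (the monomial involves only $\alpha_1$ and $\alpha_2$). The plan is to count these two families and correct for their overlap by inclusion--exclusion. The monomials with $a_1 = 0$ correspond to partitions of $n$ into parts of size $\geq 2$, whose number equals $p(n) - p(n-1)$ by the standard bijection removing or adjoining a single part equal to $1$. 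The monomials supported on $\{\alpha_1, \alpha_2\}$ are of the form $\alpha_1^{a}\alpha_2^{b}$ with $a + 2b = n$, and there are $\lfloor n/2 \rfloor + 1$ of these, one for each choice of $b \in \{0, 1, \ldots, \lfloor n/2 \rfloor\}$.

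The overlap of the two families consists of monomials that simultaneously have $a_1 = 0$ and are supported on $\{\alpha_1,\alpha_2\}$, which forces them to be pure powers $\alpha_2^b$ with $2b = n$; there is exactly one such monomial when $n$ is even and none when $n$ is odd. By inclusion--exclusion the number of monomials not in $\mathcal I^n$ is
\[
\bigl(p(n)-p(n-1)\bigr) + \left(\left\lfloor \tfrac{n}{2}\right\rfloor + 1\right) - \delta,
\]
where $\delta = 1$ if $n$ is even and $\delta = 0$ if $n$ is odd. The last step is the elementary check that $\lfloor n/2 \rfloor + 1 - \delta = \lfloor (n+1)/2 \rfloor$ in both parities, so that the codimension equals $p(n) - p(n-1) + \lfloor (n+1)/2 \rfloor$ as stated, and subtracting from $p(n)$ gives the displayed formula for $\dim(\mathcal I^n)$. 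Combined with the bound from Proposition~\ref{prop:X_q} and the paragraph preceding the corollary---which establishes that every bounded linear combination descends to $(\Omega_n^U \otimes \Q)/\mathcal I^n$---this yields the upper bound $\dim(\Omega_n^U\otimes\Q) - \dim(\mathcal I^n)$. The only mild obstacle is bookkeeping the inclusion--exclusion correctly across the even/odd parity split; everything else is routine.
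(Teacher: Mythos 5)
Your proposal is correct and takes essentially the same approach as the paper: both arguments reduce to counting monomials (equivalently, partitions) in the polynomial ring $\Q[\alpha_1,\alpha_2,\ldots]$, the paper counting the monomial basis of $\mathcal I^n$ directly as $p(n-1)-\lfloor (n+1)/2\rfloor$, while you count the complementary set of monomials by inclusion--exclusion; the two computations are equivalent. (One wording slip: your two families are not ``mutually exclusive''---as your own overlap correction $\delta$ shows---but since the inclusion--exclusion handles the overlap correctly, this does not affect the proof.)
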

\begin{proof}
We need to show that 
$$
\dim(\mathcal I^n)=p(n-1)-\left\lfloor \frac{n+1}{2}\right\rfloor.
$$
Clearly 
$$
\dim \left\langle \alpha_1\alpha_k \mid k\geq 1\right\rangle_{n} = p(n-1),
$$
and we have to subtract the number of partitions of $n-1$ by $1$ and $2$, which is 
$
\left\lfloor \frac{n+1}{2}\right\rfloor.
$   
This concludes the corollary.
\end{proof}

Finally, let us compare the upper bound from Corollary \ref{cor:upperbound} with the lower bound which is given by all Euler characteristics $\chi^p$ and all Pontryagin numbers in even complex dimension.
For this purpose, consider the ideal
\[
\mathcal J^\ast :=\left\langle \alpha_{2k+1}\mid k\geq 1\right\rangle + \left\langle \alpha_1\alpha_{2k} \mid k\geq 2\right\rangle
\] 
in $\Omega_\ast^U \otimes \Q$ which is generated by all $\alpha_{2k+1}$ with $k\geq 1$ and all $\alpha_1\alpha_{2k}$ where $k\geq 2$.
It is easily seen that the Euler characteristics $\chi^p$ as well as the Pontryagin numbers vanish on $\mathcal J^\ast$.
By \cite[Cor.\ 4]{kotschick-schreieder} the signature is the only linear combination of Pontryagin numbers which is contained in the span of the Euler characteristics $\chi^p$.
A simple dimension count therefore shows that the Euler characteristics and Pontryagin numbers in dimension $n$ form the dual space of 
\[
(\Omega_n^U \otimes \Q) \slash \mathcal J^n .
\]
We note that the inclusion $\mathcal I^n \subseteq \mathcal J^n$ is proper for all $n\geq 3$ with the exception of $n=4$, where equality holds.

\section*{Acknowledgments}
The first author is very grateful to his former advisor D.\ Kotschick for guidance during his bachelor thesis \cite{bachelor}, where most of the results of Section \ref{sec:formulas} were found. 
Both authors thank J.\ Koll\'ar for a stimulating question, P.\ Cascini, D.\ Huybrechts and B.\ Totaro for comments, and D.\ Kotschick and the anonymous referee for suggestions which improved the presentation of the results. 
During the preparation of the paper, the first author was member of the BIGS and supported by an IMPRS scholarship of the Max Planck Society; the second author is supported by the DFG Emmy Noether-Nachwuchsgruppe ``Gute Strukturen in der h\"oherdimensionalen birationalen Geometrie''. 
Both authors are member of the SFB/TR 45. 

\end{document}